\let\quadr\Quadr
\def\collin{\sim}
\def\oadjac{\mathrel{\mbox{$\collin\mkern-18mu{\raise0.8ex\hbox{$\perp$}}$}}}
\def\toadjac{\mathrel{\lower.35ex\hbox{\baselineskip-3pt\lineskip-3pt\vbox{\hbox{$\sim$}\hbox{$\sim$}}}}}
\let\doadjac\toadjac
\def\soadjac{\lower.25ex\hbox{\scriptsize\baselineskip-2.2pt\lineskip-2.2pt\vbox{\hbox{$\sim$}\hbox{$\sim$}}}}
\def\ssoadjac{\lower.21ex\hbox{\tiny\baselineskip-2.2pt\lineskip-2.2pt\vbox{\hbox{$\sim$}\hbox{$\sim$}}}}
\def\badjac{\mathchoice{\doadjac}{\toadjac}{\soadjac}{\ssoadjac}}
\def\lines{{\cal L}}
\def\izolines{{\cal G}}
\let\projlines\izolines
\def\aflines{{\cal A}}
\def\cykle{{\cal C}}
\def\inc{\mathrel{\rule{2pt}{0pt}\rule{1pt}{9pt}\rule{2pt}{0pt}}}
\def\pek{{\bf p}}
\def\wiazka{{\bf s}}
\def\peki{{\cal P}}
\def\propafpeki{\peki_{\aflines}^{{\sf 1a}}}  
\def\conicpeki{\peki_{\izolines}^{{\sf c}}}
\def\projpeki{\peki_{\izolines}^{{\sf p}}}
\def\propsafpeki{\peki_{\izolines}^{{\sf sa}}}
\def\cylinpeki{\peki_{\aflines}^{{\sf l}}} 
\def\parsafpeki{\peki_{\aflines}^{{\sf sa}}} 
\def\parafpeki{\peki_{\aflines}^{{\sf a}}}
\def\topof{{\mathrm{T}}}
\def\GrasSpace(#1,#2){{{\bf G}_{#1}({#2})}}
\def\fixlag{\ensuremath{\goth L}}
\def\fixproj{\ensuremath{\goth P}}
\def\afdim{{\mbox{\boldmath$a$}}}
\def\basdim{{\mbox{\boldmath$\nu$}}}
\def\gendim{{\mbox{\boldmath$t$}}}
\newenvironment{zapo}{%
\begin{center}\begin{minipage}[m]{0.7\textwidth}\em}{\end{minipage}\end{center}}
\newenvironment{ctext}{%
  \par
  \smallskip
  \centering
}{%
 \par
 \smallskip
 \csname @endpetrue\endcsname
}
\let\goth\mathfrak
\def\struct#1{{\ensuremath{\langle #1 \rangle}}}
\def\sub{\raise.5ex\hbox{\ensuremath{\wp}}}
\def\Lor{\;\lor\;}
\def\Land{\;\land\;}
\def\lemmaname{Lemma}
\def\theoremname{Theorem}
\def\propositionname{Proposition}
\def\corollaryname{Corollary}
\def\factname{Fact}
\newtheorem{theorem}{\theoremname}[section]
\newtheorem{thm}[theorem]{\theoremname}
\newtheorem{prop}[theorem]{\propositionname}
\newtheorem{lem}[theorem]{\lemmaname}
\newtheorem{cor}[theorem]{\corollaryname}
\newtheorem{fact}[theorem]{\factname}
\begin{document}

\title{Pencils of lines in generalized Laguerre spaces}
\author{Krzysztof Radziszewski }

\maketitle

\begin{abstract}
 In the paper we characterize subspaces and pencils of lines of 
 generalized (and multidimensional) Laguerre spaces and
 we consider definability of the structure of "conic" pencils 
 in the Grassmann space of 1-subspaces. 
 We also study definability of the underlying Laguerre geometry 
 in terms of structures of pencils of lines
 for some, more interesting, systems of pencils.
\vskip3pt 
\par\noindent
 keywords: {Multidimensional Laguerre spaces, subspaces, pencils, Grassmann spaces}
\vskip3pt  
\par\noindent
 MSC2010: {51A45, 51B99, 51A50}
\end{abstract}


\section{Introduction}

The classical Laguerre Geometry originates, primarily, in the geometry
of {\em Spee\-ren} (oriented lines) and {\em Zykeln} (oriented circles) of a Euclidean plane
which, after the Blaschke transformation can be visualized as the structure
of conics on  a projective cylinder, cf. \cite{blaszke}.
This definition can be generalized in various directions: one can investigate 
structures of oriented hyperplanes (cf. \cite{gusc}),
structures of oriented lines in a higher dimensional Euclidean space (cf. \cite{cycl3eucl}),
ovals and ovoidal surfaces replacing the ``basic'' conic of a cylinder (cf. \cite{barlot:strambach},
\cite{benc:majer}, \cite{majer}), structures on degenerate ``quadrics'' (cf. \cite{degpolar}). 

Passing to a, possibly one of the widest generalizations, we can roughly say that
{\em a generalized Laguerre space} is a geometrical structure based on a projective 
cone (with its vertex deleted) $S$ defined over a nondegenerate quadric. Several attempts to geometry on $S$
are possible. One can look, primarily, at generators of $S$, considering the underlying
point universe as the set of self conjugate points of a degenerate polarity. Following this approach
one enters into the world of polar geometry (see e.g. \cite{camer}). 
One can also try to imitate the approach of chain geometry
(cf. e.g. \cite{benz}, \cite{herc})
and distinguish as primitives the family of conics on $S$. In this paper we are closer to this second
tradition. However, if a chain space contains lines (definable in terms of chains, as it happens e.g.
in Benz-Minkowski planes, cf. \cite{benz}), it is much more convenient to have these lines distinguished as individuals
of some other sort. So, finally, a structure under our consideration has form 
$\fixlag = \struct{\text{points}, \text{cycles (=chains)}, \text{lines}}$.

With each geometrical structure we can associate the family of its subspaces.
In our case we can define the dimension function on the subspaces of \fixlag\
and after that the standard construction of the Grassmann space of $k$-subspaces of
\fixlag\ can be applied and pencils of these subspaces can be defined.
Then (also a standard one) question in the spirit of Chow appears:
can we recover the underlying geometry in terms of a geometry on its $k$-subspaces.

In the paper we answer the above question in a very particular case $k=1$; moreover, 
we restrict ourselves to geometry of lines of \fixlag.
Practically, we study in some detail the Grassmannian of (all) 1-dimensional subspaces and 
structures of pencils of lines. Dealing with (projectively) planar pencils leads to 
arguments from polar geometry, so we only mention these pencils at the end of the paper
and we concentrate upon pencils determined by (projective) cones.
The structure of such pencils seems interesting also on its own right: they do not yield any
partial linear space but, instead, they introduce a chain-space-like structure.
In both considered cases the answer is affirmative i.e. the structure \fixlag\ can
be recovered in terms of respective structures of lines.

Questions concerning structures defined on the set of cycles are addressed in some other papers.
In case of Grassmannians of chains one can, perhaps, apply techniques of \cite{pamb} and
\cite{michalak}. In case of pencils of chains (especially when so called tangent pencils are
involved) some troubles appear concerning tangency classes of chains, 
so that world needs other techniques.
\newline
As $k$ increases, \fixlag\ admits $k$-subspaces that carry quite different geometries 
and investigations on Grassmannians defined on them become much more complex.

\subsection{Definitions}

Now, let us make the geometry considered in the paper more precise.
Let $\goth P$ be a finite dimensional Pappian and non-Fanoian
projective space.
Next, let $T,B$ be two transversal subspaces of \fixproj\ and 
$\quadr$ be a nondegenerate quadric on $B$.
Finally, let $S_0$ be a projective cone with vertex $T$ over a quadric $\quadr$, 
contained in $\goth P$,
and let $S = S_0 \setminus T$.
Let $\lines$ be the set of all (nonempty) sections with $S$ of
the lines of $\goth P$ which lie on $S_0$.
Then $\lines = \aflines \cup \izolines$,
where 
$\aflines$ consists of the (sections of) lines meeting $T$, and 
$\izolines$ consists of the lines on $S_0$ that miss $T$; each line in $\izolines$ is 
contained in a subspace of the form $M+T$, where $M$ is a generator of $\quadr$.
Elements of $\izolines$ are called {\em projective lines}, and the elements of $\aflines$
are {\em affine lines}.
In the family $\aflines$ we have a natural parallelism $\parallel$. 
Let $\cykle$ be the family of all nontrivial sections $S \cap A$
which do not contain a line, where $A$ is a plane of $\goth P$.
Then $\cykle$ consists of all the conics on $S_0$. 
Set $\inc \;\; = \;\; \in \;\; \subset S\times(\lines\cup\cykle)$.
Finally,   
  $$ {\fixlag} = \struct{S,\cykle,\izolines,\aflines,\parallel,\inc} $$ 
is a generalized Laguerre space defined over a ruled quadric $\quadr$ 
and contained in a projective space $\goth P$.
If  $\quadr$ is a nonruled quadric then $\izolines = \emptyset$, $\lines = \aflines$,
and most of subsequent results concern ``nothing'' (structures with the void universe).
Thus in the whole paper {\em we assume that  $\quadr$ is ruled}.

\section{Subspaces}\label{sec:peki}

A subspace of \fixlag\ is a subset $X \subset S$ such that the conditions:
\begin{itemize}\def\labelitemi{--}
  \item
    if $|K\cap X|\geq 2$, then $K\subset X$,
  \item
    if $a\in X\cap K$ and $K\parallel M\subset X$, then $K\subset X$,
  \item
    if $|A\cap X|\geq 3$, then $A\subset X$,
  \item
    if $|X\cap A|\geq 2$ and $A\inc B\subset X$, then $A\subset X$ 
  
  ($\inc$ is the relation of tangency of cycles in this case )
\end{itemize}
hold for every $a\in S$, $ K,M\in \lines$, $ A,B\in \cykle$.

From some point of view it is relatively easy to characterize subspaces of \fixlag;
in the projective representation of the Laguerre space $\fixlag$ we have 
started from, a subspace $X$ is a section of $S$ with a projective subspace 
$\overline{S}$.
On the other hand, subspaces 
may carry quite different geometries.
If $\dim(X) =1$ the situation is clear: $X$ is either a line 
(affine or projective, then $\dim({\overline{X}}) = 1$),
or a cycle ($\dim({\overline{X}}) = 2$).
Roughly speaking, in any case $X$ is a generalized Laguerre space or it is 
a projective, semiaffine or affine space.

\section{Classification of subspaces}

  In this section we shall give a detailed classification of the subspaces of 
  \fixlag\ and analyze the arising incidence geometry.

Let us write, generally $\sub(\fixlag)$ for the family of all the subspaces of \fixlag\ 
and  $\sub_k(\fixlag)$ for the family of $k$-dimensional subspaces of 
\fixlag.

\def\subcon{{\goth d}}
\def\sublag{{\goth l}}
\def\subgen{{\goth g}}
\def\submix{{\goth s}}

Next, let us write
\begin{description}\itemsep-2pt
\item[{$\submix^{m,m'}_{d,w}$}] for the set of all affine cones with $m$-dimensional
  affine generator defined over a projective cone with $m'$-dimensional projective 
  generator and basis being a $d$-dimensional quadric with index $w$
  (note that if $X\in\submix^{m,m'}_{d,w}$ then the dimension of the vertex of
  the ``projective part" of $X$ is $m'-w-1$);
\item[{$\sublag^m_{d,w}$}]  for the set of all affine cones with $m$-dimensional
  affine generator and the $d$-dimensional quadric with index $w$ as a basis
  contained in \fixlag;
\item[{$\subcon^m_{d,w}$}] for the set of all projective cones with $m$-dimensional
  projective generator (i.e.. with $(m-w-1)$-dimensional vertex)
  and the $d$-dimensional quadric with index $w$ as a basis
  contained in \fixlag; 
\item[{$\subgen^m_d$}] for the set of $(m + d)$-dimensional
  generators of \fixlag\ which maximal affine subgenerator
  has dimension $m$.
\end{description}
From the definitions we have 
\begin{itemize}\def\labelitemi{-}\itemsep-2pt
\item
  $\sublag^{m}_{d,w} = \submix^{m,w}_{d,w}$ and
  $\subcon^{m}_{d,w} = \submix^{0,m}_{d,w}$,
\item
  $\subgen^0_d$ is the set of all $d$-dimensional projective generators of \fixlag,
\item
  $\subgen_0^m$ is the set of all $m$-dimensional affine generators of \fixlag; in 
  particular
\item 
  $\subgen^0_1 = \izolines$, 
\item
  $\subgen^1_0 = \aflines$,
\item
  $\sublag^{0}_{d,w}$ ($0$ is the dimension of a point)
  is the set of all subquadrics contained in \fixlag; 
  formally, also $\sublag^0_{d,w} = \subcon^w_{d,w}$.
  Thus
\item
  $\sublag^{0}_{1,0} = \subcon^0_{1,0} = \cykle$.
\end{itemize}

Let us write
  $\basdim$ for the dimension of a base of \fixlag\ 
  i.e. $\basdim = \dim(B)+1$,
  $\gendim$ for the dimension of a maximal projective generator of \fixlag\ 
  so $\gendim$ is the index of $Q$, 
  and
  $\afdim$ for the dimension of a maximal affine generator of \fixlag\ 
  i.e. $\afdim = \dim(T)+1$.

Fundamental properties of the subspaces of $\fixlag$ are given in the 
subsequent \ref{subsp:gen1}--\ref{subsp:gen5}.
\begin{fact}\label{subsp:gen1}
  A maximal proper subspace of \fixlag\ is an element of 
  the following four sets:
  $\sublag^{\afdim}_{\basdim-1,\gendim-1}$, 
  $\sublag^{\afdim}_{\basdim-1,\gendim}$, 
  $\sublag^{\afdim-1}_{\basdim,\gendim}$, and
  $\submix^{\afdim,\gendim}_{\basdim-2,\gendim-1}$.
  \end{fact}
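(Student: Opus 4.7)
The plan is to exploit the projective representation of $\fixlag$: every subspace of $\fixlag$ is the section $\overline{X}\cap S$ with a projective subspace $\overline{X}\subseteq\goth P$, so a maximal proper subspace corresponds to a hyperplane $H$ of the linear span of $S_0$, namely of $\goth P=T+B$. I therefore classify hyperplanes $H$ of $\goth P$ and read off the structure of $H\cap S$; the main split is whether $T\subseteq H$ or not.

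\emph{Case 1: $T\subseteq H$.} Here $H=T+H_B$ for some hyperplane $H_B\subseteq B$, and $H\cap S_0=T+(H_B\cap\quadr)$ is an affine cone with the full $\afdim$-dimensional affine generator over the quadric section $H_B\cap\quadr$. If $H_B$ is not tangent to $\quadr$, then $H_B\cap\quadr$ is a nondegenerate quadric of index $\gendim$ or $\gendim-1$, giving the two types $\sublag^{\afdim}_{\basdim-1,\gendim}$ and $\sublag^{\afdim}_{\basdim-1,\gendim-1}$. If $H_B$ is tangent to $\quadr$ at a point $q_0$, then $H_B\cap\quadr$ is a projective cone from $q_0$ over a nondegenerate quadric of index $\gendim-1$; joined with $T$ this yields a mixed cone with projective vertex $T+q_0$ and maximal projective generator of dimension $\gendim$, i.e.\ an element of $\submix^{\afdim,\gendim}_{\basdim-2,\gendim-1}$.

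\emph{Case 2: $T\not\subseteq H$.} Then $H\cap T$ is a hyperplane of $T$. For each $q\in\quadr$ the generator $T+q$ is not contained in $H$, so $H\cap(T+q)$ is a projective hyperplane of $T+q$ not containing $T$, which after removing $T$ becomes an affine subspace of projective dimension $\afdim-1$. Sweeping $q$ over $\quadr$ produces an affine cone with an $(\afdim-1)$-dimensional affine generator over all of $\quadr$, i.e.\ an element of $\sublag^{\afdim-1}_{\basdim,\gendim}$. Under the standing assumption that $\quadr$ is ruled, each $H\cap S$ is a proper nonempty subspace of $\fixlag$, and the four types are pairwise disjoint by comparing their affine-generator dimensions and their base indices.

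The principal obstacle is the tangent subcase of Case 1: one must verify that the index of a tangent hyperplane section of $\quadr$ drops by exactly one, and track how the tangency point $q_0$ is absorbed into the projective vertex of the resulting mixed cone, promoting the vertex from $T$ (dimension $\afdim-1$) to $T+q_0$ (dimension $\afdim$). Getting this combinatorial accounting right, so that the parameters match $\submix^{\afdim,\gendim}_{\basdim-2,\gendim-1}$ exactly rather than some other mixed type, is where the argument is most prone to off-by-one errors.
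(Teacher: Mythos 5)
The paper states Fact~\ref{subsp:gen1} without any proof --- it is one of the ``fundamental properties'' \ref{subsp:gen1}--\ref{subsp:gen5} that the author simply lists --- so there is no argument of the paper's to compare yours against; I can only assess your proposal on its own. Your route (identify maximal proper subspaces with hyperplane sections of $S$ via the projective representation, then split on whether $T\subseteq H$ and, when it does, on whether $H\cap B$ is tangent to $\quadr$) is the natural one, it is exhaustive, and it does land on exactly the four listed families with the parameters matching the conventions $\basdim=\dim(B)+1$, $\afdim=\dim(T)+1$. Two steps, however, are asserted rather than proved. First, the one you flag yourself: that a tangent hyperplane section of a nondegenerate quadric of index $\gendim$ is a cone over a nondegenerate quadric of index exactly $\gendim-1$. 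This is standard (the section is $q_0^{\perp}\cap\quadr$, whose radical is precisely $q_0$ and whose Witt index drops by exactly one), but announcing that the step is ``prone to off-by-one errors'' is not the same as carrying it out. Second, in Case~2 the fibrewise description --- each $H\cap(T+q)$ contributing an $(\afdim-1)$-dimensional affine piece --- does not by itself show that the union is a cone of type $\sublag^{\afdim-1}_{\basdim,\gendim}$; for that you should observe that the radical of the quadratic form restricted to $H$ equals $H\cap T$ (since $H+T=\goth P$ gives $H/(H\cap T)\cong\goth P/T$ with the same nondegenerate induced form), so $H\cap S_0$ is a cone with vertex $H\cap T$ of dimension $\afdim-2$ over a base isometric to $\quadr$, hence of unchanged index $\gendim$. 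Finally, be aware that your very first reduction (``maximal proper subspace $=$ hyperplane section'') leans on the paper's own, likewise unproved, remark that every subspace of $\fixlag$ is a section with a projective subspace; granting that, your argument is correct once the two verifications above are supplied.
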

\begin{fact}\label{subsp:gen2}
  If $X \in \sublag^m_{d,w}$ then $\dim(X) = d+m$.
\end{fact}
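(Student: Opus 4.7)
The plan is to exploit the projective representation: since $X$ is a subspace of $\fixlag$, there is a projective subspace $\overline{X}$ of $\fixproj$ with $X = \overline{X}\cap S$, and the Laguerre-dimension of $X$ will coincide with the variety dimension of the projective cone $\overline{X}\cap S_0$ (the two differ only by removal of the part lying in $T$, which is of strictly smaller dimension).

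First I would decompose $\overline{X}$ via the transversal pair $(T,B)$. Set $V := \overline{X}\cap T$, the affine vertex of $X$, and choose a complement $\overline{X'}$ of $V$ inside $\overline{X}$ that lies in $B$; this is possible because $T\cap B = \emptyset$ and $T+B = \fixproj$. Then $\overline{X} = V + \overline{X'}$ as a join of disjoint projective subspaces. The next step is to match the parameters of $\sublag^m_{d,w}$ against $\dim V$ and $\dim\overline{X'}$. Through any point $p\in Q' := \overline{X'}\cap Q$, the affine generator of $X$ is $(V+p)\setminus V$: a projective space of dimension $\dim V + 1$ with its hyperplane $V$ deleted, hence an affine space of affine dimension $\dim V + 1$. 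Setting this equal to $m$ forces $\dim V = m-1$. Since $Q'$ is a proper $d$-dimensional subquadric of $Q$, its projective hull $\overline{X'}$ must have projective dimension $d+1$.

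With these identifications in place, the fact that $S_0$ is the projective cone from $T$ over $Q$ gives $\overline{X}\cap S_0 = V \ast Q'$, the projective cone with vertex $V$ over $Q'$. The standard dimension formula for projective cones then yields $\dim(V\ast Q') = \dim V + 1 + \dim Q' = (m-1) + 1 + d = m+d$. Passing from $\overline{X}\cap S_0$ to $X = \overline{X}\cap S$ only removes the vertex $V\subset T$, a subvariety of strictly smaller dimension, so $\dim X = m+d$ as claimed.

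The principal obstacle is purely one of bookkeeping rather than of substance: one must keep straight that $m$ counts the \emph{affine} dimension of the affine generator (not its projective dimension), that a proper $d$-dimensional quadric sits in an ambient projective space of dimension $d+1$, and that the index $w$ classifies the maximal isotropic subspaces of $Q'$ but plays no role in the final dimension count. Once these conventions are pinned down, the formula $\dim X = d+m$ follows from the dimension calculus for projective cones.
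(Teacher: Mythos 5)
The paper itself offers no proof of Fact~\ref{subsp:gen2}: it is listed among the ``fundamental properties'' read off from the projective representation, so there is no argument in the text to compare yours against; your attempt has to stand on its own, and two of its steps do not. First, the decomposition is wrong as justified: from $T\cap B=\emptyset$ and $T+B=\fixproj$ it does \emph{not} follow that a complement of $V=\overline{X}\cap T$ inside $\overline{X}$ can be chosen inside $B$. Take $T$ a point and $\overline{X}$ the plane of a cycle, so that $\overline{X}$ misses $T$ and is not contained in $B$: then $V=\emptyset$, the only complement of $V$ in $\overline{X}$ is $\overline{X}$ itself, and $\overline{X}\cap B$ is merely a line. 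The step is repairable --- take an arbitrary complement $\overline{X'}$ of $V$ in $\overline{X}$ and use as base the quadric $\overline{X'}\cap S_0$, which is projectively equivalent to a subquadric of $\quadr$ under projection from $T$, instead of insisting on $\overline{X'}\cap\quadr$ --- but as written the justification is invalid.

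Second, and more substantially, everything rests on the opening assertion that the Laguerre dimension of $X$ coincides with the variety dimension of the cone $\overline{X}\cap S_0$. The dimension function of the paper is not the algebro-geometric one: it is the combinatorial dimension in the lattice of subspaces of $\fixlag$ (which is why a cycle, whose projective hull is a plane, has $\dim=1$, the same as a line whose hull is a line). Computing $\dim(V\ast Q')=(m-1)+1+d$ as a projective cone therefore does not by itself compute $\dim(X)$; the equality of the two numbers is essentially the statement to be proved. A complete argument must exhibit a maximal chain of subspaces of $X$ of length $d+m$ and show that no longer chain exists, which is exactly where the classification of the subspaces sitting inside a cone (Fact~\ref{subsp:gen3}) and the recursive structure of Theorem~\ref{subsp:gen4} enter. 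Your bookkeeping of $m$, $d$, $w$ against $\dim V$ and $\dim\overline{X'}$ is correct and is the right input for such an induction, but the induction itself --- the only genuinely non-definitional content of the Fact --- is missing.
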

\begin{fact}\label{subsp:gen3}
  If $\sub(\fixlag)\ni X \subset X'\in\sublag^m_{d,w}$ then 
  $X$ belongs to $\submix^{m_1,m'_1}_{d_1,w_1}$ or $X$ belongs to 
  $\subgen^{m_1}_{w_1}$ 
  where $\sublag^m_{d,w}\ne \submix^{m_1,m'_1}_{d_1,w_1}$,  
  $m_1=0,1,\ldots ,m, m'_1=0,1,\ldots ,w, d_1=0,1,\ldots ,d, 
  w_1=0,1,\ldots ,w$, and $\submix^{m_1,m'_1}_{d_1,w_1}$ is well defined.
\end{fact}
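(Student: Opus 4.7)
The plan is to work entirely in the projective representation. By the discussion in Section~\ref{sec:peki}, every subspace $Y$ of \fixlag\ arises as the trace on $S$ of some projective subspace $\overline{Y}$ of $\fixproj$, and conversely such a trace (when nonempty and not contained in $T$) is a subspace of \fixlag. For the ambient $X'\in\sublag^m_{d,w}$ this means $\overline{X'}=T_{X'}+B_{X'}$, where $T_{X'}$ is an $m$-dimensional affine transversal (a subspace of the form $T_0+$ affine piece, with $T_0\subseteq T$) and $B_{X'}\subseteq B$ carries a $d$-dimensional subquadric $\quadr'\subseteq\quadr$ of index $w$; the affine cone $X'$ is exactly the join of $T_{X'}$ with the affine trace of $\quadr'$.

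For $X\in\sub(\fixlag)$ with $X\subseteq X'$ set $\overline{X}\subseteq\overline{X'}$ and decompose it by the direct-sum structure of $\overline{X'}$. Write $T_X:=\overline{X}\cap T_{X'}$, let $B_X$ be the projection of $\overline{X}$ onto $B_{X'}$ modulo $T_{X'}$, and split $T_X$ further as $T_X=T_X^{\sf a}+T_X^{\sf p}$ where $T_X^{\sf a}$ is the genuinely affine part (meeting the original vertex $T$) of (projective) dimension $m_1$ and $T_X^{\sf p}$ is the ``projective isotropic'' part of dimension $m'_1$; the latter can only arise if it lies in the span of a generator of $\quadr$, so $m'_1\le w$. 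Then $X$ is obtained by joining $T_X$ with the trace of $B_X\cap\quadr'$ on $S$. A standard dimension bookkeeping shows $\dim(T_X^{\sf a})\le m$, $\dim(T_X^{\sf p})\le w$ and $\dim(B_X)\le d$, with $B_X\cap\quadr'$ being a quadric of dimension $d_1\le d$ and index $w_1\le w$ whenever it is not itself totally isotropic.

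The case split is then forced by the nature of $B_X\cap\quadr'$. If $B_X\cap\quadr'$ is a proper subquadric (of dimension $d_1\le d$ and index $w_1\le w$) then $X$ is the affine cone over this subquadric with vertex of type $(m_1,m'_1)$, which puts $X$ into the class $\submix^{m_1,m'_1}_{d_1,w_1}$ with parameters in the required ranges; the condition $\sublag^m_{d,w}\ne\submix^{m_1,m'_1}_{d_1,w_1}$ simply records that $X\subsetneq X'$ forces at least one of the parameters to be strictly smaller. If on the contrary $B_X\cap\quadr'$ degenerates — i.e., is contained in a single generator of $\quadr'$ — then the whole of $X$ lies in one generator of $\fixlag$ and hence $X\in\subgen^{m_1}_{w_1}$ for the appropriate $m_1,w_1$ produced above.

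The substantive work, and what I expect to be the main obstacle, is not the classification itself but the bookkeeping: verifying that the four indices $(m_1,m'_1,d_1,w_1)$ obtained from the intersection/projection procedure always satisfy the admissibility conditions hidden behind ``$\submix^{m_1,m'_1}_{d_1,w_1}$ is well defined'' (in particular $w_1\le m'_1+\lfloor d_1/2\rfloor$ and the index/dimension inequalities for a quadric), and that the degenerate case really does force the $\subgen$ alternative with parameters in range. This amounts to chasing how index and vertex-dimension behave under the operations ``intersect a projective subspace with a cone over a quadric'' and ``project a subspace of a cone modulo the vertex'', both of which are standard but need to be checked in each sub-configuration.
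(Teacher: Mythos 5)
The paper itself offers no proof of Fact~\ref{subsp:gen3}: it is stated as a ``Fact'' and is meant to follow from the projective representation of subspaces recalled in Section~\ref{sec:peki} (every subspace of \fixlag\ is the trace on $S$ of a projective subspace of \fixproj, and a projective subspace meets a cone over a quadric either in a cone over a subquadric or in a totally isotropic piece). Your overall strategy is exactly this intended one, so the route is right. But as written your decomposition contains a concrete error in the very place where the parameter $m'_1$ is supposed to come from. Since $X'\in\sublag^m_{d,w}=\submix^{m,w}_{d,w}$, the projective part of the vertex of $X'$ is empty, so the whole vertex $T_{X'}$ of the cone $\overline{X'}\cap S_0$ lies inside the original vertex $T$. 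Consequently $T_X:=\overline{X}\cap T_{X'}\subseteq T$ is entirely ``affine'': your splitting $T_X=T_X^{\sf a}+T_X^{\sf p}$ is vacuous, $T_X^{\sf p}$ is always empty, and the procedure as described can never output a class with $m'_1>w_1$ (e.g.\ it misses the case where $X$ is a projective cone $\subcon^{m'_1}_{d_1,w_1}=\submix^{0,m'_1}_{d_1,w_1}$ with a nontrivial vertex disjoint from $T$). The projective part of the vertex of $X$ does not sit in $T_X$ at all; it is the radical of the (possibly degenerate) quadric $B_X\cap\quadr'$, i.e.\ it records tangency of $\overline{X}$, taken modulo $T_{X'}$, to $\quadr'$. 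The correct statement is that $\overline{X}\cap S_0'$ is the preimage under the projection $\overline{X}\to B_X$ of $B_X\cap\quadr'$, hence a cone with vertex $T_X$ plus the radical of $B_X\cap\quadr'$ over the nondegenerate part of that quadric; your literal claim that $X$ is the join of $T_X$ with the trace of $B_X\cap\quadr'$ also fails for a generic $\overline{X}$, which is not the join of $\overline{X}\cap T_{X'}$ with a subspace of $B_{X'}$.

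The second issue is that you explicitly defer what you yourself identify as the substantive content: checking that the indices produced by ``intersect with a cone and project modulo the vertex'' land in the admissible ranges and that the degenerate case yields $\subgen^{m_1}_{w_1}$ with parameters in range. With the corrected source of $m'_1$ this bookkeeping is indeed routine ($d_1\le d$ and $w_1\le w$ because a subquadric of a quadric of index $w$ has index at most $w$; $m_1\le m$ because $T_X\subseteq T_{X'}$; $m'_1\le w$ because the projective generators of $X$ are projective generators of $X'$), but it is the part that actually proves the Fact, so it should be carried out rather than announced.
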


\begin{thm}\label{subsp:gen4}
  Let $X \in \submix^{m,m'}_{d,w}$. Then the geometry of the restriction
  $\fixlag\vert{X}$ of \fixlag \ to $X$ is the Laguerre space with 
  $\basdim_{\fixlag\vert{X}} = d+m'-w$,
  $\gendim_{\fixlag\vert{X}} = m'$, and
  $\afdim_{\fixlag\vert{X}} = m$.
\end{thm}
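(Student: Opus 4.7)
The plan is to exhibit the Laguerre structure on $X$ by realizing $X$ as a section of $S$ and decomposing the resulting intersection with $S_{0}$ into its cone-over-quadric data. All three invariants should then drop out of this decomposition together with a dimension count inside $\overline{X}$.

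First I would invoke the classification (cf.\ Fact~\ref{subsp:gen3} and the subspace-as-section realization underlying Section~\ref{sec:peki}) to write $X = \overline{X} \cap S$ for a uniquely determined projective subspace $\overline{X}\subset\fixproj$. The $\submix^{m,m'}_{d,w}$ structural data distinguishes three mutually transversal subspaces inside $\overline{X}$: the affine vertex $T^{X} := \overline{X} \cap T$ of projective dimension $m-1$, the projective vertex $T_{0}^{X}$ of the projective part of $X$ of dimension $m'-w-1$, and an ambient subspace $B^{X,\mathrm{nd}}$ carrying a nondegenerate $d$-dimensional basis quadric $Q^{X,\mathrm{nd}}$ of index $w$. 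These subspaces jointly span $\overline{X}$, and yield the cone decomposition $\overline{X}\cap S_{0} = T^{X} \vee T_{0}^{X} \vee Q^{X,\mathrm{nd}}$.

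Choosing then a projective complement $B^{X}$ of $T^{X}$ in $\overline{X}$ that contains $T_{0}^{X}$ and $B^{X,\mathrm{nd}}$, set $Q^{X} := B^{X} \cap S_{0} = T_{0}^{X} \vee Q^{X,\mathrm{nd}}$. With this choice $\overline{X}\cap S_{0}$ is the projective cone with vertex $T^{X}$ over the (generally degenerate) quadric $Q^{X}$, and $X = (\overline{X}\cap S_{0})\setminus T^{X}$. The key verification is that the lines, cycles and parallelism of \fixlag\ contained in $X$ coincide with those produced by the standard Laguerre construction applied to $(T^{X}, Q^{X}, B^{X})$: affine lines of $\fixlag\vert{X}$ are the sections with $X$ of lines of \fixproj\ meeting $T^{X}$ and lying on the cone, projective lines are lines of the cone missing $T^{X}$, cycles are the nontrivial plane sections of the cone by planes of $\overline{X}$ containing no line, and parallelism restricts directly. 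This identifies $\fixlag\vert{X}$ as a generalized Laguerre space with vertex $T^{X}$ and base $Q^{X}$, from which the invariants are read off: $\afdim_{\fixlag\vert{X}} = \dim T^{X} + 1 = m$; the maximal linear subspace of $Q^{X}$ is of the form $T_{0}^{X}\vee L$ for $L$ a $w$-generator of $Q^{X,\mathrm{nd}}$, of dimension $(m'-w-1)+w+1 = m'$ and no larger (else $Q^{X,\mathrm{nd}}$ would contain a $(w+1)$-generator), so $\gendim_{\fixlag\vert{X}} = m'$; and a transversality dimension count inside $\overline{X}$ using the decomposition above together with the known $\submix$-dimension of $X$ gives $\basdim_{\fixlag\vert{X}} = \dim B^{X} + 1 = d + m' - w$.

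The main obstacle I anticipate is the incidence-matching step. Because $Q^{X}$ is typically degenerate along $T_{0}^{X}$, one must verify with some care that every line and every cycle attached by the Laguerre construction to this cone-over-cone is actually realized by a line or a cycle of \fixlag\ lying in $X$, and conversely that no spurious incidences are created under restriction; the extra generators that the degeneracy introduces have to be checked against the axioms listed in Section~\ref{sec:peki}. The dimension bookkeeping, by contrast, is routine once the decomposition $\overline{X} = T^{X} \vee T_{0}^{X} \vee B^{X,\mathrm{nd}}$ and the meaning of $\afdim, \gendim, \basdim$ in the possibly-degenerate-base setting are in place.
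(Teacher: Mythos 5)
The paper states Theorem~\ref{subsp:gen4} with no proof at all --- it is one of the ``fundamental properties'' \ref{subsp:gen1}--\ref{subsp:gen5}, all given without argument --- so there is nothing of the author's to compare you against. Your skeleton is the natural one: realize $X$ as $\overline{X}\cap S$, split $\overline{X}\cap S_0$ into the affine vertex $T^X=\overline{X}\cap T$, the projective vertex $T_0^X$ and a nondegenerate basis quadric $Q^{X,\mathrm{nd}}$, then match the induced lines, cycles and parallelism against the Laguerre construction on the resulting cone. This correctly yields $\afdim_{\fixlag\vert X}=\dim(T^X)+1=m$ and $\gendim_{\fixlag\vert X}=m'$ exactly as you compute them.

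The one concrete flaw is the $\basdim$ step. From your own decomposition, $B^X=T_0^X\vee B^{X,\mathrm{nd}}$ has projective dimension $(m'-w-1)+(d+1)+1=d+m'-w+1$, so $\dim(B^X)+1=d+m'-w+2$, not $d+m'-w$; the asserted equality does not follow from any ``transversality dimension count''. The quantity that does equal $d+m'-w$ is the dimension of the base quadric $Q^X=T_0^X\vee Q^{X,\mathrm{nd}}$ as a variety, namely $(m'-w-1)+d+1$, and that is the only reading of $\basdim$ under which the theorem is even self-consistent: applying it to $X=\fixlag\in\submix^{\afdim,\gendim}_{\dim(\quadr),\gendim}$ gives $\basdim_{\fixlag}=\dim(\quadr)$, which already differs by two from the paper's literal definition $\basdim=\dim(B)+1$ (though it does match Fact~\ref{subsp:gen1}). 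So you should replace ``$\dim B^X+1$'' by ``$\dim Q^X$'' and state explicitly which convention you adopt, rather than presenting the formula as the outcome of a routine count that in fact produces a different number. Beyond that, the two loads you defer --- that every member of $\submix^{m,m'}_{d,w}$ really is a projective section $\overline{X}\cap S$, and that the induced incidence structure on the degenerate-base cone coincides with the intrinsic Laguerre one --- are exactly where the substance lies, and neither is available as a proved statement anywhere in the paper; as it stands your text identifies the right obstacles but discharges neither.
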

\begin{thm}\label{subsp:gen5}
  Let $X \in \subgen_{d}^{m}$. 
  Then the geometry of the restriction  $\fixlag\vert{X}$ of \fixlag \ to $X$ is 
  a semiaffine linear space (a hole space, c.f. \cite{sapls:1}, also called a slit space).
\end{thm}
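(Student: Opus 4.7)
The plan is to work inside the projective representation fixed at the outset. Since $X \in \subgen^m_d$ is a generator of \fixlag, I would first establish that $X = \overline{X}\cap S = \overline{X}\setminus T$, where $\overline{X}$ is a projective subspace of \fixproj\ entirely contained in the cone $S_0$, with $\dim(\overline{X}) = m+d$ and $\dim(\overline{X}\cap T) = m-1$. This is the structural heart of the statement: once $\overline{X}$ is known to be a flat subspace of $S_0$ whose intersection with $T$ has codimension $d+1$ in it, the restricted geometry is determined.

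Next I would identify the lines of $\fixlag|X$. Any line of \fixlag\ contained in $X$ is a section $\ell \cap S$ of a projective line $\ell$ of \fixproj\ that lies on $S_0$ and meets $X$ in at least two points; by the subspace axioms this forces $\ell \subset \overline{X}$. Conversely, because $\overline{X}\subset S_0$, every projective line $\ell\subset \overline{X}$ automatically lies on $S_0$, and $\ell\cap X$ is either a projective line of \fixlag\ (when $\ell\cap T=\emptyset$), an affine line of \fixlag\ (when $\ell$ meets $\overline{X}\cap T$ in exactly one point), or empty (when $\ell\subset \overline{X}\cap T$). This set of lines coincides with the line set of the slit space obtained from $\overline{X}$ by deleting $\overline{X}\cap T$.

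Then I would rule out the presence of cycles in $X$. A cycle $C = A \cap S$ requires the plane $A$ not to lie on $S_0$; if $C\subset X$ then $|A\cap X|\geq 3$ forces, by the subspace axioms, $A\subset X\subset \overline{X}\subset S_0$, a contradiction. Hence no cycles are supported by $X$, and the restricted structure reduces to a linear space with parallelism. The parallelism inherited from \fixlag\ (two affine lines parallel iff they meet $T$ in the same point) restricts on $X$ to: two affine lines are parallel iff they meet $\overline{X}\cap T$ in the same point, which is precisely the parallelism of the slit (hole) space; together with the projective lines that miss $\overline{X}\cap T$ this yields a semiaffine linear space in the sense of \cite{sapls:1}.

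The main obstacle is the first step, i.e.\ producing $\overline{X}$ with the claimed dimension data. This requires using the four subspace closure axioms (closure under lines, parallels, planes meeting in three points, and tangent cycles) to show that the projective hull of $X$ in \fixproj\ lies entirely on $S_0$; the $m+d$-dimensionality and the dimension $m-1$ of $\overline{X}\cap T$ then follow from the definition of $\subgen^m_d$ via the maximal affine subgenerator. Once $\overline{X}$ is in hand, the remaining identifications are formal verifications against the definition of a slit space.
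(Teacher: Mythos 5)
Your proposal is correct and follows exactly the route the paper intends: the paper gives no explicit proof of this theorem, treating it as an immediate consequence of the remark in Section~2 that every subspace of \fixlag\ is the section of $S$ with a projective subspace of \fixproj, which is precisely the first step you identify as the structural heart of the argument. Your subsequent identifications (lines of $X$ as traces of projective lines of $\overline{X}$ on $S_0$, absence of cycles since a plane spanned by a conic in $X$ would lie on $S_0$ and hence carry lines, and the inherited parallelism via common points on $\overline{X}\cap T$) correctly flesh out why the restriction is a slit space in the sense of \cite{sapls:1}.
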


\section{Pencils, general construction}

In accordance with the general approach adopted in incidence geometries
for an integer $k$ and $X,Z \in \sub(\fixlag)$ with $\dim(X) = k-1$, $\dim(Z) = k+1$
we define the $k$-pencil $\pek(X,Z)$ to be the set
\begin{equation}\label{def:pek}
  \pek(X,Z)\quad = \quad \left\{ Y\in\sub(\fixlag)\colon X \subset Y \subset Z,\;
  \dim(Y) = k\right\}.
\end{equation}
Actually, the formula \eqref{def:pek} defines too wide class of subsets
than those which are usually referred to as pencils.
For example, usually the set
of the cycles through a point on a (projective) sphere 
is not considered as a pencil.
At least two properties of a currently investigated family $\peki$
of pencils should be satisfied:
\begin{itemize}\itemsep-2pt\def\labelitemi{--}\em
\item
  $|p|\geq 3$ for any $p\in\peki${\normalfont;}
\item
  if $p_1,p_2 \in \peki$ and $p_1 \subset p_2$ then $p_1 = p_2$.
\end{itemize}
In the sequel in each particular case we shall write down explicitly
what types of pencils are currently admitted.

Let us begin with an analysis of possible $1$-pencils.
We write
%
$\wiazka(a)$ for the set of all lines through a point $a$ and we call such a set {\em a star}.
Analogously, one defines stars of cycles; in the paper pencils of cycles are not investigated though,
and therefore stars of cycles are not needed here.
Then suitable pencils have form
\begin{equation}\label{wz:genpenk} 
  \wiazka \cap \{ Y \colon Y \subset Z \} \text{ where }  Z\in\sub(\fixlag), \; \dim(Z) = 2,
  \text{ and } \wiazka \text{ is a star}.
\end{equation}
Two dimensional subspaces of \fixlag\ are the elements of the following classes:
%
  $\subgen^0_2$ (projective planes), 
  $\subgen^1_1$ (semiaffine planes), 
  $\subgen^2_0$ (affine planes),
  $\sublag^1_{1,0}$ (affine cones = Laguerre planes), 
  $\subcon^1_{1,0}$ (projective cones),
  $\sublag^0_{2,0}$ (M{\"o}bius planes), 
  $\sublag^0_{2,1}$ (Minkowski planes).
%
Note that the class $\submix^{1,1}_{0,0}$ is excluded from $\sub_2(\fixlag)$; its 
elements are somehow ``strange", as they are unions of two planes in $\subgen^1_1$.

%
Recall that we restrict ourselves to {\em pencils of  lines} only, and our pencils should be at least 
3-element sets. Consequently, it suffices to consider sets of the form \eqref{wz:genpenk}
with $\wiazka = \wiazka(a)$ and $a \in Z$
where $Z$ is one of the following:
an affine plane, 
a projective cone 
(in this case we assume, additionally, that $a$ is a vertex of $Z$; 
without this assumption the corresponding pencil would consist of one line only),
a projective plane, 
and
a semiaffine plane, resp.. 
The obtained classes of pencils are denoted by  
$\propafpeki$   ({\em proper affine pencils}),  
$\conicpeki$   ({\em conic pencils}),
$\projpeki$   ({\em proper projective pencils}), and
$\propsafpeki$   ({\em proper semiaffine pencils}).
Note that the elements of a conic pencil and of a proper projective pencil are projective lines,
the elements of a proper affine pencil are affine lines, while a proper semiaffine pencil
contains one affine line and its remaining lines are projective. In what follows we shall also consider 
restrictions of pencils in $\propsafpeki$ to projective lines and such a restriction will be also 
called a proper semiaffine pencil.

In case of the currently considered geometry we have the notion of a parallelism distinguished; 
in such a geometry so called ``parallel pencils'' are frequently considered. 
We follow this tradition and we consider sets of the form 
\begin{equation}\label{wz:genparpenk}
  \left\{ L \in\aflines\colon L\parallel L_0,\; L \subset Z \right\}, \text{ where }
  Z \in \sub(\fixlag), \; \dim(Z) = 2. 
\end{equation}
As above, it suffices to consider the cases when 
$Z$ is 
an affine cone, 
a semiaffine plane, and
an affine plane. 
The classes of pencils thus obtained are denoted as follows:
$\cylinpeki$ ({\em cylinder pencils}),
$\parsafpeki$ ({\em parallel semiaffine pencils}),
and
$\parafpeki$ ({\em parallel affine pencils}).

In what follows we shall try to define the underlying Laguerre geometry in terms
of the structures of the form
\begin{ctext}
  $\struct{\text{lines},\text{pencils}\_\text{of}\_\text{lines}}$
\end{ctext}
for some, more interesting, systems of pencils.
%

\section{Grassmann spaces and spaces of pencils associated with \fixlag}

One more notion will be used intensively in the sequel:
for $K_1,K_2\in \lines$ we write
\begin{eqnarray*}
   K_1 \badjac K_2 & \iff & K_1 \neq K_2 \Land \exists a \in S \; [a \inc K_1,K_2].
\end{eqnarray*}   
\subsection{Grassmann space of $1$-subspaces}

Let us begin with the simplest case when the points of the considered structure
are the $1$-dimensional subspaces of \fixlag. In symbols, 
\def\projplanes{\subgen^0_2} 
\def\safplanes{\subgen^1_1} 
\def\afplanes{\subgen^2_0} 
\def\afcones{\sublag^1_{1,0}} 
\def\projcones{\subcon^1_{1,0}} 
\def\mobplanes{\sublag^0_{2,0}} 
\def\minplanes{\sublag^0_{2,1}} 
\begin{ctext}
  $\sub_1(\fixlag) = \cykle \cup \lines$ \quad and \quad 
  $\sub_2(\fixlag) = 
  \projplanes \cup 
  \safplanes \cup 
  \afplanes \cup 
  \afcones \cup 
  \projcones \cup 
  \mobplanes \cup 
  \minplanes$.  
\end{ctext}
%

%
\begin{zapo}
  In what follows we shall be concerned with the structure
  \\
  \centerline{$\GrasSpace(1,\fixlag) := \struct{\sub_1(\fixlag),\sub_2(\fixlag),\subset}$.}
\end{zapo}
Let us write, for short, $\sub_1 = \sub_1(\fixlag)$ and $\sub_2 = \sub_2(\fixlag)$.
%
%
%
%

Through a series of subsequent lemmas we shall distinguish in terms of the 
geometry of $\GrasSpace(1,\fixlag)$ basic types of corresponding subspaces.

The crucial observation consists in the following lemma, which shows when
(formally considered) the fundamental axiom of partial linear spaces fails
in the structure $\GrasSpace(1,\fixlag)$.
\begin{lem}\label{lem:gras:2in2}
  Let $Y,Y'\in\sub_1$ and $Z,Z'\in\sub_2$. If 
  $Y,Y' \subset Z,Z'$ and $Y\neq Y'$, $Z\neq Z'$ then
  $Y,Y'\in\lines$ and $Z,Z'\in\afcones\cup\projcones\cup\minplanes$.
\end{lem}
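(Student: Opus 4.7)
The plan is to reduce everything to a question about projective closures in the ambient space \fixproj. For any subspace $X$ of \fixlag\ write $\overline X$ for its projective closure; since in the projective representation $X = \overline X \cap S$, the subspace is uniquely determined by $\overline X$, and $X \subset X'$ entails $\overline X \subset \overline{X'}$. The classification gives $\dim \overline Z = 2$ exactly when $Z \in \projplanes \cup \safplanes \cup \afplanes$ and $\dim \overline Z = 3$ exactly when $Z \in \afcones \cup \projcones \cup \mobplanes \cup \minplanes$, while $\dim \overline Y = 1$ or $2$ according as $Y \in \lines$ or $Y \in \cykle$. So the lemma reduces to a question about which projective subspaces of dimension $1$ or $2$ can lie in two distinct projective subspaces of dimension $2$ or $3$.

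The first step is to show that neither $\overline Z$ nor $\overline{Z'}$ can be a plane. If both are planes, any cycle among $Y, Y'$ immediately forces $\overline Y = \overline Z = \overline{Z'}$, hence $Z = Z'$; if both $Y, Y'$ are lines, then $\overline{Y \cup Y'}$ is either the common plane (again forcing $\overline Z = \overline{Z'}$) or a single projective line (forcing $\overline Y = \overline{Y'}$, hence $Y = Y'$). In the mixed case $\dim \overline Z = 2$, $\dim \overline{Z'} = 3$, one cannot have $\overline Z \subset \overline{Z'}$, for otherwise $Z \subset Z'$ with equal Laguerre dimension would force $Z = Z'$ and hence $\overline Z = \overline{Z'}$; so $\overline Z \cap \overline{Z'}$ is a proper subspace of the plane $\overline Z$, of projective dimension at most $1$, leaving no room for two distinct $1$-subspaces inside $\overline{Y \cup Y'}$.

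The remaining situation has both closures $3$-dimensional, placing $Z, Z'$ in $\afcones \cup \projcones \cup \mobplanes \cup \minplanes$. Then $\overline Z \cap \overline{Z'}$ has projective dimension at most $2$. If $Y$ were a cycle, the plane $\overline Y$ would have to coincide with $\overline Z \cap \overline{Z'}$, so $Y = \overline Y \cap S = Z \cap Z'$ would contain $Y'$; but a cycle contains no line and no properly smaller cycle, so this gives either $Y = Y'$ or a line inside a cycle, both absurd. Hence $Y, Y' \in \lines$. Finally, the underlying quadric of $\mobplanes = \sublag^0_{2,0}$ has index $0$ and therefore carries no line, so every $1$-subspace of a Möbius plane is a cycle; since $Y$ is a line contained in $Z$, this excludes $Z \in \mobplanes$, and likewise $Z' \notin \mobplanes$, yielding $Z, Z' \in \afcones \cup \projcones \cup \minplanes$.

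The main obstacle in writing this out carefully is the mixed-dimension step, where one must invoke the fact that a subspace is determined by its projective closure --- an implicit consequence of the dimension theory in Facts \ref{subsp:gen1}--\ref{subsp:gen3} --- to turn the inclusion $\overline Z \subsetneq \overline{Z'}$ into a genuine contradiction with $Z \neq Z'$.
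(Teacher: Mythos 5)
Your argument is correct and follows essentially the same route as the paper's (much terser) proof: rule out cycles among $Y,Y'$, rule out planes and M\"obius planes among $Z,Z'$, in each case because the two $1$-subspaces would then determine their $2$-subspace uniquely. The paper simply asserts the key intersection facts (distinct planes of $\fixlag$ share at most a line; a plane cannot meet a non-planar $2$-subspace in two lines), whereas you derive them by the dimension count on projective closures in $\goth P$ --- a legitimate filling-in of the same idea rather than a different proof.
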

\begin{proof}
  Suppose that $Y\in\cykle$; then $Y,Y'$ uniquely determines the subspace
  in $\sub_2$ which contains $Y,Y'$. This proves that $Y,Y'\notin\cykle$.
  Then possible $Z,Z'$ are those elements of $\sub_2$ that contain a line.
  On the other hand the planes of \fixlag, i.e. the elements of $\safplanes$,
  $\afplanes$, and $\projplanes$, if distinct may have at most a line in common.
  Moreover, no plane can cross a $2$-dimensional subspace that is not a plane in two lines.
  This finally yields our claim.
\end{proof}

Let us see that the lines $Y,Y'$ from \ref{lem:gras:2in2} are such that $Y\parallel Y'$ or 
$Y\badjac Y'$, because otherwise $Y,Y'$ uniquely determines the subspace in $\sub_2$ 
which contains $Y,Y'$.

As a direct consequence of \ref{lem:gras:2in2} we obtain
\begin{lem}\label{lem:gras2lines}
  Let $Y \in \sub_1$. Then
  \begin{equation}\label{def:gras2lines}
    Y \in \lines  \iff  (\exists {Y' \in \sub_1,\, Z,Z'\in\sub_2})
    [Y\neq Y' \land Z\neq Z' \land Y,Y' \subset Z,Z'].
  \end{equation}
  Consequently, the class $\lines$ is definable in terms of 
  $\GrasSpace(1,\fixlag)$.
\end{lem}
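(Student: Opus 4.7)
The plan is to prove the equivalence \eqref{def:gras2lines}; definability of $\lines$ in $\GrasSpace(1,\fixlag)$ then follows at once, because the right-hand side of \eqref{def:gras2lines} is phrased purely in the language $\struct{\sub_1,\sub_2,\subset}$. The $\Leftarrow$ direction is immediate: applying Lemma \ref{lem:gras:2in2} to the witness $(Y,Y',Z,Z')$ forces $Y,Y'\in\lines$, and in particular $Y\in\lines$. So all the work is in $\Rightarrow$: given a line $Y$, I must exhibit such a witness.

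Lemma \ref{lem:gras:2in2} together with the remark following it sharply restricts the search space: any admissible $Y'$ satisfies $Y\parallel Y'$ or $Y\badjac Y'$, and any admissible $Z,Z'$ must lie in $\afcones\cup\projcones\cup\minplanes$. I would therefore split on whether $Y$ is affine or projective. If $Y\in\aflines$, I would pick an affine line $Y'\neq Y$ parallel to $Y$; the class $\afcones=\sublag^1_{1,0}$ consists of Laguerre planes whose affine generators correspond to points of some conic in $\quadr$, so each $Z\in\afcones$ containing $\{Y,Y'\}$ corresponds to a conic of $\quadr$ passing through the two base points determined by $Y$ and $Y'$; choosing two distinct such conics gives distinct $Z,Z'\in\afcones$. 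If $Y\in\izolines$, I would pick a point $a\in Y$ and a second projective line $Y'\in\izolines$ through $a$, and realize the pair $\{Y,Y'\}$ simultaneously as (i) the two lines generating a projective cone $Z\in\projcones$ with vertex $a$, and (ii) the two generators through $a$ belonging to the two distinct rulings of a ruled $2$-dim subquadric of $\quadr$, yielding a Minkowski subplane $Z'\in\minplanes$ containing them.

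The main obstacle is the existence part of the constructions, i.e., checking that $\quadr$ and $\goth P$ are rich enough for the above witnesses to always be available. Concretely, in the affine case one needs two distinct conics on $\quadr$ through any prescribed pair of points of $\quadr$, and in the projective case one needs every projective line on $\quadr$ to lie in some ruled $2$-dim subquadric and to be completable by a second projective line through a given point. Both reduce to genericity of $\quadr$ and sufficiently large ambient dimensions of $\goth P$; under the standing assumption that $\quadr$ is ruled and nondegenerate these should hold in the cases of interest, but a careful audit of degenerate and low-dimensional boundary situations (where $\basdim$ or $\gendim$ are minimal) will be needed to cover the statement uniformly.
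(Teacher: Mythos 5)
Your overall strategy coincides with the paper's: the right-to-left implication is read off from Lemma \ref{lem:gras:2in2}, and the left-to-right implication is handled by splitting into $Y\in\aflines$ (witness $Y'$ parallel to $Y$, two distinct affine cones through the pair coming from two distinct cycles) and $Y\in\izolines$. The affine half is essentially the paper's construction. The gap is in your projective half. You ``pick a point $a\in Y$ and a second projective line $Y'\in\izolines$ through $a$'' and then claim the pair can be realized both on a projective cone and on a Minkowski subplane. For an arbitrary such $Y'$ this fails whenever $\gendim\geq 2$: if $Y$ and $Y'$ span a projective plane contained in $S$ (a member of $\projplanes$), then any $3$-space of $\goth P$ containing that plane meets $S_0$ in a degenerate, reducible quadric containing the plane, so no element of $\projcones$ and no element of $\minplanes$ can contain both lines; the unique member of $\sub_2$ through the pair is the plane itself and the witness collapses. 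This is not one of the ``degenerate and low-dimensional boundary situations'' you defer to a later audit --- such bad pairs $(Y,Y')$ exist through every point of every $\fixlag$ with $\gendim\geq2$ --- so the choice of $Y'$ must be constrained, and your proposal gives no mechanism for doing so.

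The paper's proof supplies exactly that mechanism by reversing the order of choices: it first takes a cycle $C_1$ through a point $a\in Y$, lets $Z$ be the section of $S$ by the $3$-dimensional subspace of $\goth P$ spanned by $Y$ and $C_1$ (such a $Z$ contains a cycle, hence automatically lies in $\projcones\cup\minplanes$ and not in $\projplanes\cup\safplanes$), and only then chooses $Y'\subset Z$ with $Y\badjac Y'$ through a second point $b$ of $C_1$; a second cycle $C_2$ through $a,b$ with $C_2\not\subset Z$ yields $Z'$ in the same way, and $Z\neq Z'$. If you adopt this ``subspace first, second line inside it afterwards'' order, your projective case goes through, and the residual existence checks (a cycle through a given point; a second cycle through two given points not contained in a given $2$-subspace) reduce to the same richness assumptions your affine case already uses.
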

\begin{proof}
  The right-to-left implication of \eqref{def:gras2lines} follows from
  \ref{lem:gras:2in2}. Let $Y\in\lines$ be arbitrary. 
  If $Y\in\aflines$ we take 
  arbitrary point $a \in Y$, 
  a cycle $C_1$  through $a$, 
  the cone $Z$ with base $C_1$ and generator $Y$,
  $b \in C_1$ with $b \neq a$, 
  $Y'\in\aflines$ with $b \in Y' \parallel Y$,
  $C_2\in\cykle$ with $a,b \in C_2$, $C_2 \not\subset Z$, and 
  the cone $Z'$ with base $C_2$ and generator $Y$.
  If $Y \in \projlines$, analogously, we take 
  arbitrary point $a \in Y$, 
  a cycle $C_1$ through $a$. Next we consider three-dimensional 
  subspace $V$ of $\goth P$ such that $C_1,Y\subset V$ and we take   
  the intersection  $Z$ of $V$ and $\fixlag$.
  Let $b \in C_1$, $b \neq a$, $Y'\in\projlines$ with $b \in Y'$, 
  $Y'\subset Z$ and $Y\badjac Y'$. Let $C_2\in\cykle$ with $a,b \in C_2$, 
  $C_2 \not\subset Z$. On the end we consider subspace $V'$ of $\goth P$ 
  with $C_2,Y,Y'\subset V'$ and we take  
  the intersection $Z'$ of $V'$  and $\fixlag$.
\end{proof}
\begin{lem}
  Let $Z\in\sub_2$. The following two equivalences hold:
  \begin{multline} \label{def:gras2stozki}
    Z \in \afcones \cup \projcones  \iff 
    (\exists{Y,Y'\in\lines})\big[ Y,Y'\subset Z \land Y\neq Y' \big] \Land
    \\
    (\forall{Y,Y'\in\lines})\big[Y,Y' \subset Z \land Y\neq Y' 
    \implies (\exists{Z'\in\sub_2}) [Y,Y'\subset Z'\neq Z] \big];
  \end{multline}
  \begin{multline} \label{def:gras2minko}
    Z \in \minplanes  \iff 
    (\exists{Z'\in\sub_2})(\exists{Y,Y'\in\lines})
    [Y\neq Y' \land Z\neq Z' \land Y,Y'\subset Z,Z'] \Land
    \\
    Z \notin (\afcones\cup\projcones)
  \end{multline}
  Consequently, the class of cones and the class of Minkowski planes
  contained in \fixlag \ both are definable in $\GrasSpace(1,\fixlag)$.
\end{lem}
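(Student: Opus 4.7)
The plan is to establish \eqref{def:gras2stozki} by a case analysis over the classification of $\sub_2$ and then to derive \eqref{def:gras2minko} as a short consequence together with Lemma~\ref{lem:gras:2in2}.

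For the forward direction of \eqref{def:gras2stozki} the existential conjunct is immediate: a cone carries a one-parameter family of generators. For the universal conjunct I would fix two lines $Y,Y'\subset Z$ and produce a second 2-subspace $Z'$ containing them. If $Z\in\projcones$ has vertex $a$, so that $Y,Y'$ are generators through $a$, I would construct $Z'$ as another projective cone with the same vertex but a distinct base cycle, the cycle obtained by a three-point-through-a-cycle construction in the spirit of the proof of Lemma~\ref{lem:gras2lines} and arranged to meet both $Y$ and $Y'$. If $Z\in\afcones$, so that $Y\parallel Y'$ are affine generators, the analogous construction keeps the parallelism class and varies the base cycle.

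For the reverse direction the universal conjunct applied to any one pair of lines of $Z$ supplies some $Z'\neq Z$, whence Lemma~\ref{lem:gras:2in2} forces $Z\in\afcones\cup\projcones\cup\minplanes$. To exclude $\minplanes$ I would use that a Minkowski plane is a doubly ruled quadric and therefore contains pairs $Y,Y'$ of skew projective lines taken from a common ruling family; any 2-subspace containing two skew projective lines must have three-dimensional projective hull, and among the classes in $\sub_2$ with this property the affine cones contain only affine lines, the projective cones have all their lines concurrent at the vertex, and the M\"obius planes carry no lines at all. Hence $Z$ itself is the only 2-subspace containing such a pair, contradicting the universal conjunct.

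For \eqref{def:gras2minko} the forward direction picks in $Z\in\minplanes$ an intersecting pair of lines (one from each ruling, meeting at a point $a$) and reuses the projective-cone construction above to exhibit $Z'\neq Z$ containing them; disjointness of the classes in $\sub_2$ supplies the second conjunct. The converse combines the existential conjunct with Lemma~\ref{lem:gras:2in2} to reach $\afcones\cup\projcones\cup\minplanes$, and the second conjunct restricts the disjunction to $\minplanes$. The hard part will be the cycle-existence step that supports each construction of $Z'$: one must verify, in the generalized Laguerre setting and under the prescribed incidence constraints, that a suitable cycle really does exist, that the resulting cone is nondegenerate, and that $Z'$ is genuinely distinct from $Z$. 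A related subtlety in the Minkowski exclusion is the careful enumeration of the 2-subspaces with three-dimensional projective hull, which rests on Facts~\ref{subsp:gen1}--\ref{subsp:gen5}.
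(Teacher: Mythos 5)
Your proposal is correct and follows essentially the same route as the paper: the existential conjunct by inspection, the universal conjunct by varying the base cycle of the cone through the two given generators, and the converse directions by running through the classification of $\sub_2(\fixlag)$, using that two disjoint lines of a Minkowski plane, and two distinct lines of a plane, determine the containing $2$-subspace uniquely (your explicit appeal to Lemma~\ref{lem:gras:2in2} here is a harmless streamlining of the paper's direct case analysis). The only substantive deviation is in the forward direction of \eqref{def:gras2minko}: the paper takes $Z'$ to be the intersection of \fixlag\ with the $3$-subspace of $\goth P$ spanned by $Y$, $Y'$ and an external cycle $C_2$ through two of their points --- so $Z'$ need not be a cone at all --- which sidesteps the nondegeneracy worry you rightly flag, since a cone in $\projcones$ with vertex $Y\cap Y'$ containing both lines need not exist when the base quadric is too small.
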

\begin{proof}
 Let $Z \in \afcones \cup \projcones$. By elementary geometry of planes from 
 $\afcones$ and $\projcones$, there exist 
 $Y,Y'\in\lines$ such that $Y,Y'\subset Z$ and $Y\neq Y'$.
 
 Let $Y,Y'\in\lines$ and let $Y,Y'\subset Z$ and $Y\neq Y'$. Thus there 
 exists a cycle $C_1$ such that $Z$ is the cone with base $C_1$ and 
 generator $Y$. Let $a=Y\cap C_1$ and $b=Y'\cap C_1$. Thus $a\neq b$. 
 Consider a cycle $C_2$ with $a,b\in C_2$ and $C_2\not \subset Z$. Let 
 $Z'$ be the cone with base $C_2$ and generator $Y$. Then $Z'\in\sub_2$, 
 $Y,Y'\subset Z'$ and $Z'\neq Z$.

 Assume the right-hand-side of \eqref{def:gras2stozki}.
 If 
 $Z\in \mobplanes$ then there does not exist $Y\in\lines$ with $Y\subset Z$ and we 
 have a contradiction. If $Z\in \minplanes$ then there exist $Y,Y'\in\lines$ 
 such that $Y\neq Y'$, $Y,Y'\subset Z$ and $Y\not \badjac Y'$. Thus for every 
 $Z'\in\sub_2$ such that $Y,Y'\subset Z'$ we get $Z=Z'$. This contradicts our 
 assumption. If $Z\in \projplanes \cup  \safplanes \cup  \afplanes$ then 
 $Y,Y'\in\lines$ with $Y,Y'\subset Z$ and $Y\neq Y'$ uniquely determine $Z$. 
 Thus we have a contradiction again. Hence $Z \in \afcones \cup \projcones$, 
 which completes the proof of (\ref{def:gras2stozki}).

 Let $Z \in \minplanes$. Thus $Z \notin (\afcones\cup\projcones)$. 
 By elementary geometry of Minkowski plane, 
 there exist $Y,Y'\in\lines$ with $Y\neq Y'$, $Y\badjac Y'$ and 
 $Y,Y'\subset Z$. Let $c=Y\cap Y'$ and let $C_1$ be a cycle such that 
 $c\notin C_1\subset Z$. Thus there exist points $a=Y\cap C_1$, $b=Y'\cap C_1$. 
 Of course $a\neq b$. 
 Let $C_2$ be a cycle such that $a,b\in C_2$ and 
 $C_2\not \subset Z$. 
 Consider the intersection $Z'$ of \fixlag\ and the
 3 - subspace $V$ of $\goth P$ with $C_2,Y,Y'\subset V$.

 Let $Z\in\sub_2$; assume that the right-hand-side of \eqref{def:gras2minko} holds.
 %
 %
 If 
 $Z\in \mobplanes$ then there does not exist $Y\in\lines$ with $Y\subset Z$ and we 
 have a contradiction. 
 If $Z\in \projplanes \cup  \safplanes \cup  \afplanes$ then a pair
 $Y,Y'\in\lines$ with $Y,Y'\subset Z$ and $Y\neq Y'$ uniquely determines $Z$. 
 Thus we have a contradiction again. 
 Hence $Z \in \minplanes $, and we have completed the 
 proof of (\ref{def:gras2minko}).
\end{proof}

By elementary geometry of planes from $\minplanes$, $\projcones$ and $\afcones$  
we get
\begin{lem}\label{lem:gras2afproj}
  Let $Y \in \sub_1$. We have
  \begin{eqnarray} \label{def:gras2projlines}
    Y \in \izolines & \iff & Y \in\lines \Land 
    (\exists{Z \in \minplanes})[Y \subset Z];
    \\ \label{def:gras2aflines}
    Y \in \aflines & \iff & Y \in\lines \Land Y \notin \izolines.
  \end{eqnarray}
  Further, let $Z\in\sub_2$. Then
  \begin{eqnarray} \label{gras2stozki}
    Z \in \projcones & \iff & Z \in (\projcones\cup\afcones)
    \Land (\exists{Y\in\izolines})[Y \subset Z];
  \\ \label{gras2walce}
    Z \in \afcones & \iff & Z \in (\projcones\cup\afcones)
    \Land Z \notin \projcones.
  \end{eqnarray}
\end{lem}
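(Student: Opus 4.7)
My plan is to dispatch the four equivalences in turn, all of them boiling down to two elementary structural facts about which lines can sit inside a plane of each listed type, combined with the classes $\lines$, $\minplanes$ and $\projcones\cup\afcones$ that were already made definable in \ref{lem:gras2lines} and the preceding lemma.

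The two facts I will record first are:
\emph{(i)} every line contained in a $Z\in\minplanes$ belongs to $\izolines$, and conversely every $Y\in\izolines$ is contained in some $Z\in\minplanes$;
\emph{(ii)} every line contained in a $Z\in\projcones$ belongs to $\izolines$, while every line contained in a $Z\in\afcones$ belongs to $\aflines$.
Fact \emph{(i)}, left-to-right, follows because an element of $\minplanes=\sublag^0_{2,1}$ is a subquadric contained in $\fixlag = S_0\setminus T$, hence disjoint from the vertex $T$, so it cannot contain an affine line. For the converse I will mimic the construction of $Z'$ in the proof of \ref{lem:gras2lines}: given $Y\in\izolines$, pick $a\in Y$, a cycle $C\in\cykle$ through $a$ with $C$ not coplanar with $Y$, and let $V$ be the $3$-dimensional projective subspace of $\goth P$ spanned by $Y$ and $C$; then $V\cap S_0$ is a $2$-dimensional subquadric containing a projective line, hence ruled, missing $T$, i.e.\ an element of $\minplanes$ through $Y$. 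Fact \emph{(ii)} reflects the definition: a projective cone in $\subcon^1_{1,0}$ is ruled by projective generators through its point vertex, an affine cone in $\sublag^1_{1,0}=\submix^{1,0}_{1,0}$ is ruled by affine generators through $T\cap\overline{Z}$, and in a $2$-dimensional cone no further lines appear.

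Granted \emph{(i)}, equivalence \eqref{def:gras2projlines} is immediate: $Y\in\izolines$ forces $Y\in\lines$ and, by the converse half of \emph{(i)}, a witnessing $Z\in\minplanes$; the reverse uses the forward half of \emph{(i)} to upgrade $Y\in\lines$ contained in some $Z\in\minplanes$ to $Y\in\izolines$. Then \eqref{def:gras2aflines} follows from the disjoint decomposition $\lines=\aflines\cup\izolines$ built into the definition of $\fixlag$. For \eqref{gras2stozki} and \eqref{gras2walce}, within $\projcones\cup\afcones$ we use \emph{(ii)}: if $Z\in\projcones$ its generators witness the existential clause, while if $Z\in\afcones$ then every line of $Z$ is affine, so no $Y\in\izolines$ sits inside; this is exactly the dichotomy needed, and \eqref{gras2walce} is just the complementary clause inside $\projcones\cup\afcones$.

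The only non-bookkeeping step is the converse part of fact \emph{(i)}: I must justify that the $3$-subspace $V$ spanned by a chosen projective line $Y$ and a suitable cycle $C$ really cuts $S_0$ in a Minkowski plane (a $2$-dimensional ruled subquadric disjoint from $T$) and not in some degenerate configuration. This is where the ``ruled $\quadr$'' assumption is really used, and the argument is essentially the one already rehearsed for the projective-line case in the proof of \ref{lem:gras2lines}; I will therefore cross-refer to it rather than redoing the calculation.
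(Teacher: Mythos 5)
There is a genuine gap, and it sits exactly where you located it: the converse half of your fact \emph{(i)}. Your one\nobreakdash-shot construction takes $V=\langle Y,C\rangle$ and asserts that $V\cap S_0$ is an element of $\minplanes$, but membership in $\minplanes=\sublag^0_{2,1}$ requires the section to be a \emph{nondegenerate} ruled quadric disjoint from $T$, and neither property follows from ``$C$ is a cycle through $a$ not coplanar with $Y$''. First, $V$ may be tangent to $S_0$, in which case $V\cap S_0$ is a quadric cone, i.e.\ an element of $\projcones$ rather than of $\minplanes$; the paper itself concedes this possibility in the proof of \ref{lem:stoz2adjac}, where the span of a projective line and a cycle through a common point is only placed in $\projcones\cup\minplanes$. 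Second, nothing prevents the $3$-space $V$ from meeting the vertex $T$ (even though $Y$ and the plane of $C$ each miss $T$), in which case the section is not even a subquadric of the required kind. Your proposed fix --- cross-referring to the proof of \ref{lem:gras2lines} --- does not close the gap, because that proof only needs \emph{some} $2$-dimensional subspace $Z'$ containing the relevant lines and never establishes (or uses) that the section is nondegenerate or of Minkowski type.

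The paper's own argument is built precisely to avoid this: it starts from a base quadric $\quadr$ of $\fixlag$ containing $Y$ and repeatedly intersects with a hyperplane through $Y$ that is \emph{non-tangent} to the current quadric, so that each section stays a nondegenerate ruled quadric of one dimension less, terminating in an element of $\minplanes$. To repair your proof you would either have to reproduce that descent, or show directly that $C$ can be chosen so that $\langle Y,C\rangle$ is disjoint from $T$ and non-tangent to $S_0$; as written, the key existence claim is unproved. The remaining three equivalences (\eqref{def:gras2aflines}, \eqref{gras2stozki}, \eqref{gras2walce}) are handled correctly and at the same level of detail as the paper, which also treats them as immediate once \eqref{def:gras2projlines} is available.
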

\begin{proof}
 The right-to-left implication of \eqref{def:gras2projlines} is evident. Let 
 $Y \in \izolines$. (Of course $Y \in\lines$.) Thus there exists a base $\quadr$ of 
 $\fixlag$ with $Y\subset \quadr$. If $Q\in \minplanes$ we take $Z:=\quadr$. 
  \begin{enumerate}
   \item
    $\quadr\not \in \minplanes$. Let $V_1$ be the subspace of $\goth P$ spanned by 
    $\quadr$ and let $W_1$ be a non tangent to $Q$  hyperplane of $V_1$
    such that $Y\subset W_1$. Then  
    $Q_1 = W_1\cap \quadr$ is a ruled quadric with 
    $\dim(Q_1) = \dim(\quadr) - 1$ and $Y\subset Q_1$. If $Q_1\in \minplanes$ then we
    take $Z:= Q_1$.
   \item
    $Q_1\not \in \minplanes$. Let $V_2$ be the subspace of $\goth P$ spanned by 
        $Q_1$ and let $W_2$ be a non tangent to $Q_1$  hyperplane of $V_2$
        such that $Y\subset W_2$. 
        Then  $Q_2 = W_2\cap Q_1$ is a ruled quadric with 
        $\dim(Q_2) = \dim(Q_1) - 1$ and $Y\subset Q_2$. 
        If $Q_2\in \minplanes$ then we take $Z:=Q_2$. 
  \end{enumerate}
 If $Q_2\not \in \minplanes$ then after a finite number of steps 
 analogous to the above we find 
 $Q_i\in \minplanes$ with $Y\subset Q_i$ and we set $Z:=Q_i$. 

 Now equivalences of \eqref{def:gras2aflines}, \eqref{gras2stozki} 
 and \eqref{gras2walce} are evident.
\end{proof}
\begin{cor}
  The structure of ``conic" pencils i.e. the structure
  \begin{ctext}
    $\struct{\izolines,\projcones,\subset}$
  \end{ctext}
  is definable in $\GrasSpace(1,\fixlag)$.
\end{cor}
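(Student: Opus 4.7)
The plan is to observe that the corollary packages the three preceding lemmas into a single statement. Since the signature of $\GrasSpace(1,\fixlag) = \struct{\sub_1,\sub_2,\subset}$ already contains the incidence relation $\subset$ as a primitive, it suffices to show that both $\izolines \subseteq \sub_1$ and $\projcones \subseteq \sub_2$ are definable subsets of the universe; once this is done, the substructure $\struct{\izolines,\projcones,\subset}$ is obtained by relativizing $\GrasSpace(1,\fixlag)$ to these two definable classes, and the restricted incidence is automatic.

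First I would invoke \ref{lem:gras2lines} to obtain a defining formula for $\lines \subseteq \sub_1$. Next, the preceding lemma supplies, via \eqref{def:gras2stozki} and \eqref{def:gras2minko}, defining formulas for the classes $\afcones\cup\projcones$ and $\minplanes$ inside $\sub_2$. At this stage all the auxiliary classes needed by \ref{lem:gras2afproj} are available as parameter-free definable subclasses of $\sub_1$ or $\sub_2$.

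I would then read off a defining formula for $\izolines$ straight from \eqref{def:gras2projlines} of \ref{lem:gras2afproj}: a point $Y \in \sub_1$ belongs to $\izolines$ exactly when $Y \in \lines$ and $Y \subset Z$ for some $Z$ in the (already definable) class $\minplanes$. Analogously, \eqref{gras2stozki} supplies a defining formula for $\projcones$: an element $Z \in \sub_2$ lies in $\projcones$ iff $Z \in \afcones\cup\projcones$ and some $Y \in \izolines$ satisfies $Y \subset Z$. The restriction of $\subset$ to $\izolines \times \projcones$ is then simply the ambient $\subset$ restricted to these two definable classes.

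Because each step merely reuses the explicit equivalences proved earlier, no genuine obstacle arises. The only point that requires care is to check that the chain of definitions stays within the signature $\struct{\sub_1,\sub_2,\subset}$, i.e.\ that every quantifier appearing in the formulas cited from \ref{lem:gras2afproj} ranges over a class whose definability has already been established; a routine inspection of \eqref{def:gras2projlines} and \eqref{gras2stozki} confirms that this is so.
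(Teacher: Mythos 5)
Your proposal is correct and follows exactly the route the paper intends: the corollary is just the conjunction of the defining equivalences \eqref{def:gras2lines}, \eqref{def:gras2stozki}, \eqref{def:gras2minko}, \eqref{def:gras2projlines} and \eqref{gras2stozki}, applied in that order so that each formula only quantifies over classes already shown definable in $\GrasSpace(1,\fixlag)$. Nothing further is needed.
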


\subsection{``Conic" pencils}

Now, we assume that $\basdim\geq 3$.
Then for every point $a$ of $\fixlag$ there exists 
$S' \in \projcones$ with vertex $a$.  Let us pay attention to the structure
\def\ConSpace{\mbox{\boldmath${\mathscr C}$}}
\begin{ctext}
    $\ConSpace := \struct{\izolines,\projcones,\subset} \cong 
    \struct{\izolines,\conicpeki}$.
\end{ctext}

Let us begin with the following characterization of the adjacency relation
of projective lines:
\begin{lem}\label{lem:stoz2adjac}
  Let $L_1,L_2 \in\izolines$. Then
  \begin{multline}\label{def:stoz2adjac}
    L_1 \badjac L_2 \iff (\exists{S' \in \projcones})\big[ L_1,L_2 \subset S'\big] \Lor
    (\exists{M_1,M_2\in\izolines})(\exists{S_1,S_2 \in\projcones})
    \\
    \big[ L_1,M_1,M_2 \subset S_1 \land L_2,M_1,M_2 \subset S_2  \land M_1\neq M_2\big].
  \end{multline}
\end{lem}
\begin{proof}
  The right-to-left implication of \eqref{def:stoz2adjac} is evident;
  the projective lines which are contained in a cone in $\projcones$
  all pass through its vertex. 
  
  The point is to prove the converse implication.
  Let $a\inc L_1,L_2$ and let $Z$ be a $2$-dimensional subspace that contains $L_1,L_2$.

  If $Z \in\projcones$ we are done.
  
  Let $Z\in \minplanes$. Let $\quadr$ be a base of $\fixlag$ such that $L_1, L_2\subset \quadr$. 
  Consider a hyperplane $V$ of projective space spanned by $\quadr$, tangent to $\quadr$ at the 
  point $a$. Thus $V\cap \quadr$ contains a cone $S'\in\projcones$ with vertex $a$ such 
  that $L_1, L_2\subset S'$.

  Let $Z \in \projplanes \cup \safplanes$. Let $b\inc L_1$ and $b\neq a$. Thus there 
  exists a cycle $C$ with $b\inc C$. Consider $Z_1\in \sub_2$ spanned by $L_1$ and $C$. 
  Thus $Z_1\subset \projcones\cup \minplanes$. By above, 
  there exists a cone 
  $S_1 \in \projcones$, with vertex $a$, such that $L_1\subset S_1$. 
  Let a cycle $C_1$ be a base of the cone $S_1$ with $b\inc C_1$. 
  Let $c , d\inc C_1$ with $\neq (c, d, b)$. 
  Thus there exist $M_1, M_2\in \projlines$ such that $a, c\inc M_1$, $a, d\inc M_2$. 
  Of course $M_1, M_2\subset S_1$. Consider the lines $M_1, M_2$, and $L_2$. 
  Note
  that no plane of $\goth P$ contains $M_1\cup M_2\cup L_2$. Hence 
  $M_1$, $M_2$, and $L_2$ together span a $3$-space $V_1$ of $\goth P$. Thus 
  $S\cap V_1\in \sub_2$. Now it is easily seen that $S\cap V_1\in \projcones$. 
  Let $S_2 = S\cap V_1$. Hence we 
  proved  the right-hand-side of \eqref{def:stoz2adjac}.
\end{proof}
From \ref{lem:stoz2adjac} we learn that the adjacency of lines is definable 
in terms of the geometry of $\ConSpace$. To prove that the whole geometry 
of \fixlag \ can be interpreted in the geometry of $\ConSpace$ it suffices to prove that
the concurrency of  lines in $\izolines$ is definable in terms of their
adjacency. 
\begin{lem}\label{lem:prostatniestozek}
  Let $L_1,L_2$ be two distinct projective lines contained in a cone 
  $S'\in\projcones$ with vertex $a$.
  If $L\in \lines$ and $L \badjac L_1,L_2$ then $a \inc L$.
\end{lem}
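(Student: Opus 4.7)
The plan is to argue by contradiction and exploit that $S'$, as a $2$-dimensional subspace of $\fixlag$, forces any line meeting it twice to lie inside it, after which the cone structure of $S'$ leaves no room for a line missing the vertex.

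Suppose, towards a contradiction, that $a \notinc L$. Since $L \badjac L_1$, there exists a point $b_1$ with $b_1 \inc L$ and $b_1 \inc L_1$; our assumption $a\notinc L$ forces $b_1\neq a$. In the same way, from $L \badjac L_2$ we get $b_2\inc L,L_2$ with $b_2\neq a$. Next I would observe that $b_1\neq b_2$: the lines $L_1,L_2$ are two distinct projective lines of $\fixlag$ arising from distinct lines of $\goth P$ sharing the vertex $a$, so in $\goth P$ they intersect only in $a$; hence $L_1\cap L_2=\{a\}$ in $S$, and $b_1\in L_1\setminus\{a\}$ cannot lie on $L_2$.

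Thus $L$ shares at least two distinct points $b_1, b_2$ with the subspace $S'$. Applying the first subspace axiom (if $|K\cap X|\geq 2$ then $K\subset X$, for $K$ a line and $X$ a subspace) to $K=L$ and $X=S'$, we conclude $L\subset S'$. It then suffices to show that every line of $\fixlag$ contained in the projective cone $S'\in\projcones=\subcon^1_{1,0}$ passes through its vertex $a$. This is where I would invoke the structure of $S'$: in the ambient projective space $\goth P$, $S'$ is a projective quadric cone over a (non-degenerate) cycle with a single vertex $a$, and the only projective lines lying on such a cone are its generators, all of which pass through $a$. Therefore $a \inc L$, contradicting the standing assumption.

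The only genuine subtlety is this last step, namely, identifying the lines of $\fixlag$ living inside $S'$ with the generators of the quadric cone. This is really just the classical fact that a non-degenerate quadric cone in a projective $3$-space admits a unique ruling through its vertex, transported to $\fixlag$ through the projective representation introduced in the Definitions; once this is granted, the conclusion $a\inc L$ is immediate.
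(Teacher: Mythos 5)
Your proof is correct and follows essentially the same strategy as the paper's: assume $a \not\inc L$, extract two common points $b_1 \inc L, L_1$ and $b_2 \inc L, L_2$, note they are distinct because $L_1 \cap L_2 = \{a\}$, and derive a contradiction from the geometry of the cone. The only divergence is in the closing step: the paper observes that $b_1, b_2$ lie on a common cycle of $S'$, which is incompatible with their lying on a common line of $\fixlag$, whereas you first force $L \subset S'$ via the subspace axiom and then invoke the classical fact that the only lines on a quadric cone are its generators through the vertex. Both closings are sound and amount to the same underlying observation that the chord $b_1b_2$ is a secant rather than a generator; yours has the merit of appealing explicitly to the subspace axioms stated in Section~2.
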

\begin{proof}
  Let $L\in \lines$ and  $L \badjac L_1,L_2$. Suppose that $a\not \inc L$. 
  Then there exist two points $b, c$ such that $b\neq c$, \  $b, c\inc L$, \ 
  $b\inc L_1$, and $c\inc L_2$. But, by elementary geometry of planes from $\projcones$,  
  there exists a cycle $C$ with $b,c\inc C$. 
  Hence we have a contradiction.
\end{proof}
\begin{lem}\label{lem:stoz2points}
  Let $S' \in\projcones$, $a$ be the vertex of $S'$, and $L \in\projlines$.
  Then 
  \begin{equation}\label{def:stoz2points}
    a \inc L \iff (\forall{L'\in\projlines})[L'\subset S' \implies L\badjac L'\Lor L=L'].
  \end{equation}
\end{lem}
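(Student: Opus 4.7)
The plan is to prove the two implications separately, leaning almost entirely on the preceding Lemma~\ref{lem:prostatniestozek} for the harder direction and on the structure of a projective cone (every projective line on $S'$ passes through its vertex $a$) for everything else.

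For the left-to-right implication, assume $a \inc L$ and take any $L' \in \projlines$ with $L' \subset S'$. Since $S' \in \projcones$ is a projective cone with vertex $a$ over a conic, every projective line lying on $S'$ is a generator from $a$ to a point of the base, and therefore $a \inc L'$. If $L' \neq L$, then $L$ and $L'$ are two distinct projective lines sharing the point $a$, so by the definition of $\badjac$ we get $L \badjac L'$. This is a routine unpacking of the conic structure of $S'$ and does not need any auxiliary constructions.

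For the right-to-left implication, assume the condition on the right-hand side of \eqref{def:stoz2points}. If $L \subset S'$, then $L$ itself is a generator of the projective cone $S'$ and we are immediately done. Otherwise $L \not\subset S'$, so in particular $L \neq L'$ for every $L' \in \projlines$ contained in $S'$, and by hypothesis $L \badjac L'$ for all such $L'$. Since the base of $S'$ is a (nondegenerate) conic containing many points, we can pick two distinct points $b_1 \neq b_2$ on the base and form the two distinct projective lines $L_1 = a \vee b_1$ and $L_2 = a \vee b_2$; both lie on $S'$. By hypothesis $L \badjac L_1$ and $L \badjac L_2$, so Lemma~\ref{lem:prostatniestozek} applies and gives $a \inc L$.

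The only point requiring any care is the ability to select the two distinct generators $L_1, L_2$ through $a$ in the second paragraph, but this is guaranteed by the very definition $\projcones = \subcon^1_{1,0}$: the base is a one-dimensional quadric of index $0$, i.e.\ a conic, which has more than two points in the Pappian setting of $\fixproj$ we work in. Hence the real content of the lemma is packed into \ref{lem:prostatniestozek}, and I do not expect any genuine obstacle beyond recording these two reductions.
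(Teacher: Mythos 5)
Your proof is correct and follows essentially the same route as the paper: the forward direction is the observation that all generators of $S'$ pass through its vertex, and the converse picks two distinct generators of $S'$ and invokes Lemma~\ref{lem:prostatniestozek}. Your explicit case split on whether $L\subset S'$ is in fact slightly more careful than the paper's treatment of the disjunction $L\badjac L'_i \Lor L=L'_i$, but the substance is identical.
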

\begin{proof}
  Let $a \inc L$. Assume that $L'\in\projlines$ and $L'\subset S'$. Thus $a \inc L'$, 
  so $L\badjac L' \Lor L=L'$.

  Assume the right-hand-side of \eqref{def:stoz2points}.
  Consider 
  $L'_1, L'_2\in\projlines$ with $L'_1, L'_2\subset S'$ and $L'_1\neq L'_2$. 
  From assumption 
  we get $L\badjac L'_1, L'_2 \lor L=L'_1\lor L=L'_2$. 
  By \ref{lem:prostatniestozek}, $a \inc L$.
\end{proof}

In view of \ref{lem:stoz2points} the family 
\begin{ctext}
  $\left\{ \{ L\in\projlines\colon  
  (\forall{L'\in\projlines})[L'\subset S' \implies L\badjac L'\Lor L=L']\} 
  \colon S' \in\projcones \right\}$
\end{ctext}
coincides with the family 
  $\{ \wiazka(a)\cap \projlines \colon a \in S\}$
and thus the latter is definable in \ConSpace.
Since, clearly,
$\struct{S,\projlines,\in} \cong 
  \struct{{\{\wiazka(a)\cap \projlines \colon a \in S\}},\projlines,\ni}$,
we conclude with
\begin{cor}
  The structure $\struct{S,\projlines}$ is definable in \ConSpace.
\end{cor}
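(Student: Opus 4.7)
The plan is to read the corollary as an immediate harvesting of what the preceding two lemmas have already established. By Lemma \ref{lem:stoz2adjac} the adjacency relation $\badjac$ on $\projlines$ is definable in \ConSpace, and by Lemma \ref{lem:stoz2points}, for each cone $S' \in \projcones$ the subset
\begin{ctext}
  $\{ L\in\projlines \colon (\forall L'\in\projlines)[L'\subset S' \implies L\badjac L' \Lor L = L']\}$
\end{ctext}
coincides with $\wiazka(a)\cap\projlines$, where $a$ is the vertex of $S'$. So the first step I would record is that the family $\mathcal F := \{\wiazka(a)\cap\projlines \colon a\in S\}$ is definable in \ConSpace, using $\projcones$ to parametrize its elements and the definable relation $\badjac$ to cut them out.

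The second step is to exhibit the canonical isomorphism
\begin{ctext}
  $\struct{S,\projlines,\in} \;\cong\; \struct{\mathcal F,\projlines,\ni}$
\end{ctext}
induced by $a \mapsto \wiazka(a)\cap\projlines$. Surjectivity onto $\mathcal F$ is by definition. For injectivity I would argue that since $\basdim\geq 3$ and $\quadr$ is ruled, through each point $a\in S$ there pass at least two distinct projective lines whose intersection as point-sets is exactly $\{a\}$; so distinct points $a\neq b$ yield distinct stars $\wiazka(a)\cap\projlines \neq \wiazka(b)\cap\projlines$. The incidence $a\in L$ on the left corresponds to $L\in \wiazka(a)\cap\projlines$, i.e.\ to the reverse containment $\ni$ on the right, which is what the isomorphism requires.

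The only genuine obstacle I foresee is this injectivity verification — everything else is a bookkeeping transfer from Lemmas \ref{lem:stoz2adjac} and \ref{lem:stoz2points}. The $\basdim\geq 3$ hypothesis, declared just before \ConSpace\ was introduced, is exactly what guarantees both the existence of a cone $S'\in\projcones$ with arbitrary prescribed vertex (so that Lemma \ref{lem:stoz2points} can be applied at every point) and the presence of enough projective lines through each point to separate points by their stars. With these two ingredients in place the corollary follows in one line.
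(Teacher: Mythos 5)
Your proposal matches the paper's own argument: the paper derives the corollary from the same two ingredients, namely that Lemma \ref{lem:stoz2points} makes the family $\{\wiazka(a)\cap\projlines \colon a\in S\}$ definable in \ConSpace\ and that $\struct{S,\projlines,\in}\cong\struct{\{\wiazka(a)\cap\projlines\colon a\in S\},\projlines,\ni}$. The only difference is that you spell out the injectivity of $a\mapsto\wiazka(a)\cap\projlines$ (two distinct projective lines through a point meet only there), which the paper dismisses with ``clearly''; your verification is correct.
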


Note also that in terms of the adjacency of projective lines we can distinguish two
cases: {\em adjacent lines  are on a cone} and 
{\em adjacent lines are on a (affine, semiaffine, or projective) plane}.
Note that the case when $L_1,L_2$ ($L_1\badjac L_2$) are on a cone may cover also the case 
when they are on a Minkowski plane contained in \fixlag.
\begin{prop}\label{prop:adjac2cone}
  Let $L_1,L_2\in\projlines$, $L_1 \badjac L_2$.
  The following conditions are equivalent
  \begin{itemize}\def\labelitemi{--}
  \item
    There is $S'\in\projcones$ such that $L_1,L_2 \subset S'.$
  \item
    The formula
    \begin{equation}\label{def:adjac2cone}
       (\forall{K_1,K_2\in\projlines})
       [L_1,L_2 \badjac K_1,K_2 \implies 
       K_1 \badjac K_2\Lor K_1=K_2]
    \end{equation}
    holds.
  \end{itemize}
\end{prop}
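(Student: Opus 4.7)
The plan is to prove the two directions separately. For the easy direction ($\Rightarrow$), suppose $L_1, L_2$ lie on a common cone $S' \in \projcones$ with vertex $a$. By Lemma \ref{lem:prostatniestozek}, every line $K \in \lines$ with $K \badjac L_1, L_2$ must pass through $a$. Applying this to any two projective lines $K_1, K_2$ both adjacent to $L_1$ and $L_2$ gives $a \in K_1 \cap K_2$, whence either $K_1 = K_2$ or $K_1 \badjac K_2$; this establishes \eqref{def:adjac2cone}.

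For the converse I would argue by contraposition: assume no cone in $\projcones$ contains $L_1, L_2$ and produce distinct $K_1, K_2 \in \projlines$, each adjacent to both $L_1, L_2$, with $K_1 \not\badjac K_2$. Let $a = L_1 \cap L_2$ and let $Z \in \sub_2$ be the 2-subspace they span. Inspecting which elements of $\sub_2$ can contain two distinct meeting projective lines, and using the tangent-hyperplane argument from the proof of Lemma \ref{lem:stoz2adjac} to dispose of the Minkowski case (in which $L_1, L_2$ would already lie on a cone), one sees that under the hypothesis necessarily $Z \in \projplanes \cup \safplanes$.

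To build the witnesses I would pick $b \in L_1 \setminus \{a\}$ and $c \in L_2 \setminus \{a\}$ so that the line $bc \subset Z$ is projective — this is automatic in a projective plane and in the semiaffine case merely requires avoiding one distinguished direction, which is possible by varying $b,c$. Setting $K_2 := bc$, we have $K_2 \badjac L_1, L_2$ at $b,c$, and $a \notin K_2$ (else $L_1 = L_2$). Next take $K_1 \in \projlines$ through $a$ with $K_1 \not\subset Z$. Then $K_1 \badjac L_1, L_2$ through $a$, and since $K_1 \not\subset Z$ the projective intersection $K_1 \cap Z$ equals $\{a\}$; combined with $a \notin K_2 \subset Z$ this yields $K_1 \cap K_2 = \emptyset$, i.e. $K_1 \not\badjac K_2$, contradicting \eqref{def:adjac2cone}.

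The main obstacle, and the only place where the standing subsection assumption $\basdim \geq 3$ is really used, is the existence of such a $K_1$. Under $\basdim \geq 3$ there exists a cone $S' \in \projcones$ with vertex $a$, so the projective lines through $a$ form a genuine one-parameter family sweeping out a 2-dimensional cone at $a$; if all of them were contained in $Z$, that cone would have to coincide with $Z$, forcing $Z \in \projcones$ and contradicting $Z \in \projplanes \cup \safplanes$. Hence some projective line through $a$ escapes $Z$, furnishing the required $K_1$ and completing the contradiction.
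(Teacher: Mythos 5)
Your proof is correct and follows essentially the same route as the paper: the forward direction is verbatim the paper's appeal to Lemma~\ref{lem:prostatniestozek}, and the converse proceeds, as in the paper, by classifying the $2$-subspace $Z$ containing $L_1,L_2$ and disposing of the Minkowski case via the tangent-hyperplane argument from Lemma~\ref{lem:stoz2adjac}. The one place you diverge is where the paper merely asserts that a plane $Z\in\projplanes\cup\safplanes$ ``contains $K_1,K_2$'' violating \eqref{def:adjac2cone} --- which cannot be read literally, since two distinct projective lines inside such a plane always meet --- and there your explicit witnesses (a secant $K_2\subset Z$ missing $a$, and a generator $K_1$ of a cone with vertex $a$ that escapes $Z$, whose existence uses $\basdim\geq 3$) correctly supply the detail the paper glosses over.
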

\begin{proof}
  Assume that there is $S'\in\projcones$ such that $L_1,L_2 \subset S'$. 
  Let $K_1,K_2\in\projlines$ and $L_1,L_2 \badjac K_1,K_2$. 
  By  \ref{lem:prostatniestozek}, $a\inc K_1, K_2$ where $a$ is 
  the vertex of $S'$. 
  Hence $K_1 \badjac K_2\lor K_1=K_2$. 

  Assume \eqref{def:adjac2cone}.
  Let $Z\in\sub_2$ with $L_1,L_2 \subset Z$. 
  Thus either $Z\in\projcones$ or $Z\in \projplanes$ or 
  $Z\in \safplanes$ or $Z\in \minplanes$. 
  Evidently, if $Z\in \minplanes$ then 
  there exists $S'\in\projcones$ with $L_1,L_2 \subset S'$. 
  However if 
  $Z\in \projplanes$ or $Z\in \safplanes$ then $Z$ contains 
  $K_1,K_2$ which contradicts \eqref{def:adjac2cone}. 
  Finally, there is $S'\in\projcones$ such that $L_1,L_2 \subset S'$.
\end{proof}

\subsection{Planar pencils}

Usually, when one deals with structures with lines then he  considers {\em planar}
pencils. 
Consequently, one should primarily consider the structure 
\begin{enumerate}[a)]\itemsep-2pt
\item\label{planpenc1}
   ${\goth G} = \struct{\projlines,\projpeki}$, 
\item\label{planpenc2}
  ${\goth G} = \struct{\projlines,\projpeki\cup\propsafpeki}$, 
  and 
\item\label{planpenc3}
  ${\goth G} = \struct{\lines,\peki_{\lines}}$,
  where 
  $\peki_{\lines} = 
  \left\{ \{ L\in\lines\colon a \in L \subset Z \}\colon 
  a \in Z \in \safplanes\cup\afplanes\cup\projplanes \right\}$.
\end{enumerate}
The obtained structures are partial linear spaces.
Here some standard methods used in Grassmann geometries of polar and projective
spaces can be used (cf. \cite{cohen}, \cite{pankow}; 
in any case we begin with determining maximal cliques of
the collinearity of the considered structure and maximal strong 
subspaces.

\subsubsection{Case \ref{planpenc1})}

Now the maximal cliques of $\goth G$ fall into two classes:
\begin{enumerate}[(\ref{planpenc1}.1)]\itemsep-2pt
\item\label{plp1:typ1}
  $\left\{\topof(Z) \colon Z \in \projplanes \right\}$,
  where $\topof(Z) = \{ L\in\projlines \colon L\subset Z \}$,
\item\label{plp1:typ2}
  $\left\{ [a,Y]\colon a \in Y\in \subgen^0_{\gendim} \right\}$,
  where $[a,Y] = \{ L\in\projlines\colon a \in L \subset Y \}$.
\end{enumerate}
Simultaneously, they are maximal strong subspaces of $\goth G$;
actually, they are projective spaces. 
Within the partial linear space
$\goth G$ we have $\dim(\topof(Z)) = 2$ and $\dim([a,Y]) = \gendim-1$.
If $\gendim \neq 3$ then the above two types of maximal cliques can be
distinguished in terms of the geometry of $\goth G$.

\subsubsection{Case \ref{planpenc2})}

The maximal cliques of $\goth G$ (and, at the same time, maximal strong subspaces of $\goth G$) 
fall into two classes:
\begin{enumerate}[(\ref{planpenc2}.1)]\itemsep-2pt
\item\label{plp2:typ1}
  $\left\{\topof(Z) \colon Z \in \projplanes\cup\safplanes \right\}$,
  where $\topof(Z)$ is as in (\ref{planpenc1}.\ref{plp1:typ1}),
\item\label{plp2:typ2}
  $\left\{ [a,Y]\colon a 
  \in Y\in \subgen^{\afdim}_{\gendim} \right\}$,
  $[a,Y]$ is as in (\ref{planpenc1}.\ref{plp1:typ2}).
\end{enumerate}
Note that the subspace $\topof(Z)$ carries the geometry of 
projective plane (when $Z \in\projplanes$) or the geometry of affine
plane (when $Z \in \safplanes$). In any case, from the point of view of $\goth G$,
$\dim(\topof(Z)) = 2$. Thus these two types of subspaces 
in the class \ref{plp2:typ1} are distinguishable within $\goth G$.

In $\goth G$ we have $\dim([a,Y]) = \dim(Y)-1 = \gendim+\afdim-1$;
thus maximal strong subspaces of the form $[a,Y]$ have dimension $2$
only in the case when $\gendim+\afdim = 3$.
If $\gendim = 2$, $\afdim=1$ then $Y \in \subgen^1_2$ and 
$[a,Y]$ is a semiaffine plane (a projective plane with one point deleted)
and thus a subspace of the form $[a,Y]$ is distinguishable from subspaces
of the form $\topof(Z)$.

If $\gendim = 1$, $\afdim = 2$ then $[a,Y]$ is an affine plane.
At the same time, $\projplanes = \emptyset$ and thus {\em every} maximal 
subspace of $\goth G$ is an affine plane and there is no way to distinguish
the two types (\ref{planpenc2}.\ref{plp2:typ1}), (\ref{planpenc2}.\ref{plp2:typ2})
following the above way.
In any other case these two types are distinguishable. 
Finally, for $Y_1,Y_2 \in \subgen^{\afdim}_{\gendim}$ and points $a_i\in Y_i$
it suffices to characterize the relation 
$a_1 = a_2$ in terms of the subspaces 
${\cal X}_1 = [a_1,Y_1]$, ${\cal X}_2 = [a_2,Y_2]$ and the geometry of $\goth G$
to get that the structure $\struct{S,\projlines}$ can be defined in $\goth G$.

\subsubsection{Case \ref{planpenc3})}

In this case the maximal cliques of $\goth G$ fall into two classes:
\begin{enumerate}[(\ref{planpenc3}.1)]\itemsep-2pt
\item\label{plp3:typ1}
  subsets of $\topof^\ast(Z) = \left\{ L\in\lines \colon L \subset Z \right\}$, where
  $Z \in \projplanes\cup\safplanes\cup\afplanes$, and
\item\label{plp3:typ2}
  $\left\{ [a,Y]^\ast\colon a \in Y \in \subgen_{\gendim}^{\afdim} \right\}$,
  where $[a,Y]^\ast = \{ L\in\lines\colon a \in L \subset Y \}$.
\end{enumerate}
If $Z \in \projplanes$ then $\topof^\ast(Z)$ is a clique.
If  $Z \in \afplanes$ then $\topof^\ast(Z)$ is not any clique as it contains
a pair of parallel lines; a clique $\cal K$ contained in $\topof^\ast(Z)$
is a selector of the horizon of $Z$:
it contains exactly one line in each of the directions on $Z$.
If $Z\in\safplanes$ then a clique contained in $\topof^\ast(Z)$
has form $\topof(Z) \cup \{ L \}$, where $L$ is an affine line on $Z$
and $\topof(Z)$ is defined in (\ref{planpenc1}.\ref{plp1:typ1}).
Note that cliques of type \ref{plp3:typ1} are subspaces of $\goth G$
only when $Z \in\projplanes$ or $Z \in \afplanes$ and a clique $\cal K$ 
contained in $\topof^\ast(Z)$ is a selector of the horizon of $Z$ which is a pencil, 
and cliques of the form $[a,Y]^\ast$ are subspaces of $\goth G$. If ${\cal K}$ 
is a clique contained in  $\topof^\ast(Z)$ then $\cal K$ spans in $\goth G$ the 
subspace $\topof^\ast(Z)$. As previously, if $\dim(Y)\neq 3$ then the class of 
cliques $[a,Y]^\ast$ is distinguishable and then we can reconstruct 
$\struct{S,\lines}$ in $\goth G$.

\subsubsection{Pencils of affine lines}

Note that the structure 
${\goth G} = \struct{\aflines,\peki_{\aflines}}$
with
$\peki_{\aflines} = \left\{ \{ L\in\aflines\colon a \in L \subset Z \}
\colon a \in Z \in \afplanes \right\}$
is useless to characterize the geometry of $\fixlag$.
One can even expect that under some additional assumptions the structure
$\struct{S,\aflines}$ can be defined in $\goth G$; clearly,
$\goth G$ is definable in $\struct{S,\aflines}$,
but $\fixlag$
is not definable in the latter.


\noindent
{\small Krzysztof Radziszewski\\
 Mathematics Teaching and Distance Learning Centre of\\ Gda{\'n}sk University of Technology\\
 ul. Zwyci{\c{e}}stwa 25\\ 80-233 Gda{\'n}sk\\ Poland\\ 
e-mail: krzradzi@pg.gda.pl}


\begin{thebibliography}{9}\small
\bibitem{barlot:strambach}
  {\sc A. Barlotti, K. Strambach},
  {\it Collineation groups of ovals and of ovoidal Laguerre planes},
  J. Geom. {\bf 57} (1996), no.1--2, 36--57
\bibitem{benz}
  {\sc W. Benz}, 
  {\it Vorlesungen {\"u}ber Geometrie der Algebren}, Springer Verlag 1973
  Springer V. 1973
\bibitem{benc:majer}
  {\sc W. Benz, H. M{\"a}urer},
  {\it {\"U}ber die Grundlagen der Laguerre-Geometrie. Ein Bericht}
  Jber. Deutsch. Math.-Verein. {\bf 67} (1964/65) Abt. I, 14--42
\bibitem{blaszke}
  {\sc W. Blaschke},
  {\it {\"U}ber die Laguerrsche Geometrie orientierter Geraden in der Ebene I},
  Arch. Math. Phys. {\bf 18} (1911), 132--140
\bibitem{camer}
  {\sc P. J. Cameron},
  {\sl Projective and polar spaces},
  QMW Maths Notes 13, 1991.
\bibitem{cohen}
  {\sc A. M. Cohen},
  {\it Point-Line Spaces Related to Buildings}, in: {\it Handbook of Incidence Geometries},
  ed. by F. Buekenhout, Elsevier 1995, 647--737
\bibitem{gusc}
 {\sc H. Gu{\'s}ciora}, 
 {\it On the geometry of oriented equiaxial hyperquadrics},
 Bull Acad. Polon. Sci. Ser. Sci. Math. Astronom. Phys. 17 (1969), 29--35
\bibitem{herc}
  {\sc A. Herzer},
  {\it Chain Geometries}, in: {\it Handbook of Incidence Geometries},
  ed. by F. Buekenhout, Elsevier 1995, 781--842
\bibitem{degpolar}
  {\sc Ch. Lef{\`e}vre-Percsy},
  {\it Espaces polaires d{\'e}g{\'e}n{\'e}r{\'e}s des espaces projectifs},
  Simon Stevin {\bf 55} (1981), no. 4, 237--246
\bibitem{majer}
  {\sc H. M{\"a}urer},
  {\it M{\"o}bius- und Laguerre-Geometrien {\"u}ber konvexen Semifl{\"a}chen},
  Math. Z. {\bf 98} (1967), 355--386
\bibitem{michalak}
  {\sc E. Michalak, K. Pra{\.z}mowski},
  {\it Grassmannians of spheres in M{\"'}bius and in Euclidean spaces},
  mimeographed.
\bibitem{pamb}
  {\sc V. Pambuccian},
  {\it Sphere tangency as single primitive notion for hyperbolic and Euclidean geometry},
  Forum Math. {\bf 15}(2003), {943--947}
\bibitem{pankow}
  {\sc M. Pankov, K. Pra{\.z}mowski, M. {\.Z}ynel},
  {\it Geometry of polar Grassmann spaces},
  Demonstratio Math. {\bf 39}(2006) no. 3, {625--637}. 
\bibitem{cycl3eucl}
  {\sc K. Pra{\.z}mowski},
  {\it Multidimensional Euclidean geometry of cycles and axes},
  Algebra, Geom. Appl. Semin. Proc., Erevan,
  vol. 3--4 (2004), 5--18
\bibitem{sapls:1}
  {\sc K. Radziszewski},
  {\it Subspaces and parallelity in semiaffine partial linear spaces},
  Abh. Math. Sem. Univ. Hamburg {\bf 73}(2003), 131--144
\end{thebibliography}
\end{document}